\newcommand*\samethanks[1][\value{footnote}]{\footnotemark[#1]}
\title{The bias of isotonic regression}
\author{Ran Dai\thanks{Department of Statistics, University of Chicago}, Hyebin Song\thanks{Department of Statistics, University of Wisconsin--Madison},
 Rina Foygel Barber\samethanks[1], Garvesh Raskutti\samethanks[2]}
\date{\today}
\newtheorem{theorem}{Theorem}
\newtheorem{lemma}{Lemma}
\newtheorem{remark}{Remark}
\newcommand{\R}{\mathbb{R}}
\newcommand{\muh}{\widehat{\mu}}
\newcommand{\EE}[1]{\mathbb{E}\left[{#1}\right]}
\newcommand{\VV}[1]{\mathrm{Var}\left({#1}\right)}
\newcommand{\PP}[1]{\mathbb{P}\left\{{#1}\right\}}
\newcommand{\iso}{\textnormal{iso}}
\newcommand{\eqd}{\stackrel{\textnormal{d}}{=}}
\newcommand{\normal}{\mathcal{N}}
\newcommand{\norm}[1]{\left\|{#1}\right\|}
\newcommand{\One}[1]{\mathbbm{1}\left\{{#1}\right\}}
\DeclareMathOperator{\diag}{diag}
\newcommand{\ident}{\mathbf{I}}
\DeclareMathOperator{\polylog}{polylog}
\newcommand{\iidsim}{\stackrel{\textnormal{iid}}{\sim}}
\begin{document}

\maketitle

\begin{abstract}
We study the bias of the isotonic regression estimator. While there is extensive work characterizing the mean squared error of the isotonic regression estimator,
relatively little is known about the bias. In this paper, we
provide a sharp characterization, proving that the bias scales as $O(n^{-\beta/3})$ up to log factors,
 where $1 \leq \beta \leq 2$ is the exponent corresponding to H{\"o}lder smoothness of the underlying mean. 
Importantly, this result only requires a strictly monotone mean and that the noise distribution has subexponential tails,
without relying on symmetric noise or other restrictive assumptions.
\end{abstract}

\section{Introduction}

\label{sec:intro}

Isotonic regression involves solving a regression problem under monotonicity constraints. In particular, suppose that we observe data $Y\in\R^n$ with $\EE{Y}=\mu$,
where the mean $\mu$ satisfies a monotonicity constraint,
\[\mu_1 \leq \dots\leq \mu_n.\]
To recover the mean vector $\mu$, the least-squares isotonic regression estimator is given by
\[\muh = \iso(Y),\]
where $y\mapsto\iso(y)$ is the projection to the monotonicity constraint, defined on $y\in\R^n$ as
\begin{equation}\label{eqn:define_iso}\iso(y) = \arg\min_{x\in\R^n} \left\{\norm{y-x}^2_2 : x_1\leq \dots \leq x_n\right\}.\end{equation}
Since this constraint defines a convex region in $\R^n$ (in fact, a convex cone),
the resulting optimization problem is convex, and can be solved efficiently
with the ``pool adjacent violators'' (PAVA) algorithm \citep{barlow1972}.

There is a large literature on isotonic regression dating back to the work of \citet{bartholomew59a, bartholomew59b,brunk1955,miles59};
 for further background see, e.g., \citet{barlow1972,robertson1988order}.
Isotonic regression also appears in the problem of estimating a monotone density  estimation, namely, in the
Grenander estimator \citep{grenander1956theory}. 

Recently, there has been renewed interest in isotonic regression as one of the most widely-used examples of regression under shape constraints---for example, the work of \citet{deLeeuw2009,chatterjee2015,gao2017,han2017,guntuboyina18,banerjee2019}. Given the significance of isotonic regression, understanding its statistical properties is of fundamental importance. In this paper, we provide a sharp characterization of the bias of the isotonic regression estimator.

\subsection{Bias and variance}

It is well known that the estimation error $\muh - \mu$ of isotonic regression 
decays at a slower rate than for parametric regression, generally with a dependence
on $n$ that scales as $|\muh_i - \mu_i| \asymp n^{-1/3}$, rather than the rate $n^{-1/2}$ 
that we would expect for parametric problems. 

To make this more precise,
suppose that the mean vector $\mu$ is Lipschitz, satisfying $|\mu_i - \mu_{i+1}|\leq L_1/n$,
and that the noise $Z = Y - \mu$ has independent entries $Z_i$ with $\EE{Z_i}=0$
and with each $Z_i$ assumed to be $\lambda$-subgaussian, that is, $\EE{e^{tZ_i}}\leq e^{t^2\lambda^2/2}$ for any $t\in\R$. In this setting, many works including \citet{brunk1969,cator2011,chatterjee2015}
establish the $n^{-1/3}$ bound on the root-mean-square error of isotonic regression; this scaling can be proved to hold pointwise at each index $i$
(an analogous $n^{-1/3}$ rate is established by \citet{durot2012} for the Grenander estimator of a monotone density).
For instance, \citet{yang2018}
prove that, for any $\delta>0$.
\begin{equation}\label{eqn:yang}\PP{\max_{i_0\leq i \leq n+1-i_0}\big|\muh_i - \mu_i| \leq \sqrt[3]{\frac{8L_1\lambda^2\log\left(\frac{n^2+n}{\delta}\right)}{n}}}\geq 1-\delta,\end{equation}
where
\[i_0 = \left(\frac{\lambda n\sqrt{\log\left(\frac{n^2+n}{\delta}\right)}}{L_1}\right)^{2/3}.\]
Furthermore, \citet{chatterjee2015} establish that this error scaling attains the minimax rate, meaning that we cannot improve on the $n^{-1/3}$
 error rate obtained by fitting $\muh$ via least-squares isotonic regression. However, in certain special cases,
such as when the mean vector $\mu$ is piecewise constant, the error of $\muh$ achieves the parametric $n^{-1/2}$ rate.

While these results summarized above give a fairly complete picture of the tail behavior
of the error $\muh-\mu$ in isotonic regression, relatively little is known about its expected
value $\EE{\muh} - \mu$, i.e., the bias of the least-squares estimator. 
\citet{banerjee2019} establish an asymptotic result under certain smoothness
and strict monotonicity conditions on $\mu$,
proving that the bias is at most
\[\big|\EE{\muh_i} - \mu_i\big| \lesssim n^{-7/15}.\]
In the special case where the variance of the noise is constant across the data,
with $\VV{Z_i} = \sigma^2$ for all $i$, \citet{durot2002} prove an improved bound of 
\[\big|\EE{\muh_i} - \mu_i\big| \lesssim n^{-1/2}.\]
However, as \citet{banerjee2019} point out, in many settings we would prefer to avoid
the assumption of constant variance---for example, if the data is binary with 
$Y_i\sim\textnormal{Bernoulli}(\mu_i)$, the variance of the $Z_i$'s will certainly 
be nonconstant. Furthermore, in the prior literature, it is not clear if $n^{-1/2}$ is the correct scaling for the bias (whether
in the constant-variance or general case), or if this scaling might be possible to improve.

\subsection{Contributions}
In this work, we examine the question of bias more closely, and prove that
\begin{equation}\label{eqn:mainresult_upperbd}
\big|\EE{\muh}-\mu\big|\lesssim \frac{\polylog n}{n^{2/3}},\end{equation}
with no assumption of constant variance, when the underlying mean $\mu$ is Lipschitz, strictly increasing, and smooth.
This is
a faster scaling than what
was previously known for both constant and non-constant variance settings. 
We furthermore establish weaker bounds when $\mu$ satisfies only H{\"o}lder smoothness, with
\begin{equation}\label{eqn:mainresult_upperbd_holder}
\big|\EE{\muh}-\mu\big|\lesssim \left(\frac{\log n}{n}\right)^{\beta/3}\end{equation}
when $\mu$ is H{\"o}lder smooth with exponent $\beta$, for some $1\leq \beta < 2$. ($\beta=2$ corresponds
to the smooth case.)
Matching lower bounds show that, up to log factors, our results are tight.

In particular, if we only assume $\mu$ to be Lipschitz but without smoothness, this corresponds to H{\"o}lder smoothness
with $\beta=1$. In this case, the bias is lower bounded as $n^{-1/3}$ up to log factors. Since it is known
that the error $|\muh_i - \mu_i|$ is bounded on the order of $n^{-1/3}$ (up to logs) with 
high probability, we see that without a smoothness assumption, it may be the case
that the bias is on the same order as the error itself---that is, bias does not vanish relative to variance.
At the other extreme, under smoothness ($\beta = 2$), error scales as $n^{-1/3}$ while bias is bounded as $n^{-2/3}$ (up to logs),
meaning that the bias is vanishing.

\section{Main results}
We begin with some definitions and conditions on the distribution of the data.
We assume that the mean vector $\mu$ is Lipschitz,
\begin{equation}\label{eqn:mu_L1}
|\mu_j - \mu_i|\leq L_1\cdot \frac{j-i}{n}\quad \textnormal{for all $1\leq i\leq j\leq n$}
\end{equation} 
for some $L_1$,
and is monotonic,
\begin{equation}\label{eqn:mu_L0}
\mu_j - \mu_i \geq L_0\cdot \frac{j-i}{n} \quad \textnormal{for all $1\leq i\leq j\leq n$}
\end{equation}
for some $L_0\geq 0$ (note that $L_0>0$ corresponds to a strictly increasing condition). The significance of the strictly increasing assumption is illustrated in Theorem~\ref{thm:lowerbd_strictincr}. In many settings, we might think of the mean values $\mu_i$ as evaluations of an underlying function at a sequence of values---for example an evenly spaced grid, i.e., $\mu_i = f(i/n)$ for $i=1,\dots,n$. In this case, the constants $L_0,L_1$ are lower and upper bounds on the gradient of the underlying function $f$.

Next, we will also assume that $\mu$ is H{\"o}lder smooth, satisfying
\begin{equation}\label{eqn:mu_smooth}
\left| \mu_j  - \left(\frac{k-j}{k-i}\cdot\mu_i + \frac{j-i}{k-i}\cdot\mu_k\right)\right| \leq \frac{M}{4} \cdot \left(\frac{k-i}{n}\right)^{\beta}\quad\textnormal{for all $1\leq i\leq j \leq k\leq n$},
\end{equation}
for some $M$ and some exponent $\beta$ with $1\leq \beta \leq 2$.  As before, if $\mu_i = f(i/n)$ for some underlying function $f$ defining the signal, then $(\beta,M)$-H{\"o}lder smoothness of the function $f$, defined as the property that 
\begin{equation}\label{eqn:function_holder_smooth}|f'(t_0)-f'(t_1)|\leq M|t_0-t_1|^{\beta-1}\textnormal{ for all $t_0,t_1$},\end{equation} is sufficient to ensure that the vector of mean values $\mu$ satisfies this assumption.

The exponent $\beta$ in assumption~\eqref{eqn:mu_smooth} controls the smoothness, with $\beta = 2$ corresponding to a bounded second derivative (or a Lipschitz gradient) while $\beta<2$ denotes a weaker smoothness assumption. In particular, if we were to take $\beta=1$, then this assumption does not in fact imply any smoothness, as it is trivially satisfied with $M = L_1$ for any signal $\mu$ that is $L_1$-Lipschitz as in our condition~\eqref{eqn:mu_L1}.

Next we turn to our assumptions on the noise $Z$. We assume independent noise with subexponential tails:
\begin{multline}\label{eqn:noise_assump}
\textnormal{The $Z_i$'s are independent, with $\EE{Z_i}=0$ }\\
\textnormal{and $\EE{e^{t Z_i}}\leq e^{t^2\lambda^2/2}$ for all $|t|\leq \tau$ and all $i=1,\dots,n$}.
\end{multline}
We will also require that the variances $\sigma^2_i$
are bounded from below and are Lipschitz along the sequence $i=1,\dots,n$, satisfying
\begin{equation}\label{eqn:sig_Lip}
 \VV{Z_i} = \sigma^2_i \geq\sigma^2_{\min}>0\textnormal{ and }
\left|\sigma_i - \sigma_j\right| \leq L_\sigma\cdot \frac{j-i}{n}\quad\textnormal{for all $1\leq i\leq j\leq n$}.
\end{equation}
(Note that we must have $\sigma^2_i\leq \lambda^2$ by the subexponential tails assumption~\eqref{eqn:noise_assump}.)

\subsection{An upper bound for smooth signals}
We now present our upper bound on the bias of isotonic regression.
\begin{theorem}\label{thm:upperbd_smooth}
Let $Y=\mu+Z$, where the signal $\mu$ and noise $Z$ satisfy
 assumptions~\eqref{eqn:mu_L1}, \eqref{eqn:mu_L0}, \eqref{eqn:mu_smooth}, \eqref{eqn:noise_assump}, \eqref{eqn:sig_Lip}, 
 with parameters $0<L_0\leq L_1$, $M$, $\beta\in[1,2]$, $\lambda>0$, $\tau>0$, $L_\sigma>0$, $\sigma_{\min}>0$. 
 Then $\muh=\iso(Y)$ satisfies
\[\big|\EE{\muh_i}-\mu_i\big| \leq \begin{cases} C\left(\frac{\log n}{n}\right)^{\beta/3}, &\textnormal{ if } 1\leq \beta < 2, \\
C\left(\frac{(\log n)^5}{n}\right)^{2/3}, &\textnormal{ if } \beta = 2,\end{cases}
 \]
 for all $i$ with
\[C' (\log n)^{1/3} n^{2/3} \leq i \leq n+1- C'(\log n)^{1/3} n^{2/3},\]
where $C,C'$ depend only on the parameters in our assumptions,
and not on $n$.
\end{theorem}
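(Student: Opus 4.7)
The plan is to work from the max-min representation
\[\muh_i = \max_{j\leq i}\min_{k\geq i}\bar Y_{[j,k]},\qquad \bar Y_{[j,k]} = \frac{1}{k-j+1}\sum_{l=j}^k Y_l,\]
and split $\bar Y_{[j,k]} = \bar\mu_{[j,k]}+\bar Z_{[j,k]}$. My strategy has three ingredients: a high-probability localization confining the optimizing indices to a window of size $w_n = C'(\log n)^{1/3} n^{2/3}$ around $i$; a H\"older expansion replacing $\bar\mu_{[j,k]}$ by an affine function of $j+k$ on that window, up to an $O((w_n/n)^\beta)$ residual; and a finite-sample switch-relation argument for the residual ``linear drift plus noise'' max-min.

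First, apply~(\ref{eqn:yang}) with $\delta = n^{-p}$ for a large constant $p$, and use the strict monotonicity~(\ref{eqn:mu_L0}) to convert pointwise closeness $\muh\approx\mu$ into closeness of the optimizing indices to $i$, concluding that off an event of probability $\leq n^{-p}$ both $j^\ast$ and $k^\ast$ lie in $[i-w_n, i+w_n]$. On the exceptional event, $|\muh_i-\mu_i|$ is bounded crudely by $\max_l|Z_l|+L_1$, with expectation $O(\log n)$ under~(\ref{eqn:noise_assump}), giving an $O(n^{-p}\log n)$ contribution to the bias. On the good event, assumption~(\ref{eqn:mu_smooth}) with endpoints $i\pm w_n$ gives
\[\bar\mu_{[j,k]} = \mu_i + a\cdot\frac{j+k-2i}{2n} + r_{j,k},\qquad a\in[L_0,L_1],\ \ |r_{j,k}|\leq C_M (w_n/n)^\beta,\]
and since $r_{j,k}$ acts as an additive perturbation of the max-min, this residual propagates as a bias contribution of $O(((\log n)/n)^{\beta/3})$, which already matches the target rate for $\beta<2$.

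It then remains to bound $|\EE{T}|$ where $T = \max_u\min_v\{a(v-u)/(2n)+\bar Z_{[i-u,i+v]}\}$ with $u=i-j$, $v=k-i$. For this I would use the switch relation $\PP{\muh_i\leq \mu_i+s} = \PP{\arg\min_k V_k(\mu_i+s)\geq i}$, where $V_k(t) = \sum_{l\leq k}(Y_l-t)$. A second-order expansion of the deterministic part of $V_k$ around $k=i$ produces a parabolic drift of strength $\asymp L_0/n$ (using the strict monotonicity~(\ref{eqn:mu_L0})) plus a subexponential partial-sum process with Lipschitz variance profile~(\ref{eqn:sig_Lip}). Bernstein-type tail bounds then show that the argmin of this drift-plus-random-walk concentrates within $O(n^{2/3}\polylog(n))$ of $i$ with tails that integrate to $O(\polylog(n)/n^{2/3})$; integrating $\PP{T\geq s}+\PP{T\leq -s}$ over $s>0$ delivers the $(\log n)^{10/3}/n^{2/3}$ bound needed at $\beta=2$, and combines with the H\"older residual and the bad-event contribution to yield the theorem.

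The crux is this last step. Without the constant-variance assumption used by~\citet{durot2002}, one cannot exploit the $Z\mapsto -Z$ symmetry that yielded the sharper $n^{-1/2}$ rate there; instead one has to argue, purely from moment control and Lipschitz variance, that the bias of the argmin of a parabolic-drift subexponential random walk is $O(\polylog(n)/n^{2/3})$ regardless of noise asymmetry. Getting the right polylog exponent at $\beta=2$ is likely to require bootstrapping the localization (re-applying Step~1 with a tighter window once an improved bias bound is available) and a delicate accounting of subexponential rather than subgaussian tails throughout the Bernstein step.
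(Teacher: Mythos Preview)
Your first two ingredients---localization to a window of width $w_n\asymp n^{2/3}(\log n)^{1/3}$ via \eqref{eqn:yang} and strict monotonicity, then a H\"older/affine replacement of $\bar\mu_{[j,k]}$ incurring an $O((w_n/n)^\beta)$ error---match the paper's Steps~2 and~3 closely. The gap is in your third ingredient.

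After the affine replacement you are left with $T=\max_u\min_v\{a(v-u)/(2n)+\bar Z_{[i-u,i+v]}\}$ and must show $|\EE{T}|\lesssim \polylog(n)/n^{2/3}$. Your plan is to use the switch relation and Bernstein tails to get concentration of the argmin, then ``integrate $\PP{T\geq s}+\PP{T\leq -s}$ over $s>0$.'' But that integral is $\EE{|T|}$, not $|\EE{T}|$, and $\EE{|T|}\asymp n^{-1/3}$ is exactly the error scale you are trying to beat. To bound the \emph{bias} you need $\EE{T_+}-\EE{T_-}$ small, i.e.\ near-cancellation between the upper and lower tails of $T$. Nothing in your argmin-localization or tail-integration step produces that cancellation; for general asymmetric subexponential noise with non-constant variance, there is no obvious reason the two tails of the localized max--min should match to the required precision. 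Your closing paragraph acknowledges this (``without the constant-variance assumption\dots one cannot exploit the $Z\mapsto -Z$ symmetry''), but the bootstrapping suggestion does not supply the missing cancellation either---tightening the window only sharpens bounds on $|T|$, not on $\EE{T}$.

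The paper resolves exactly this point by \emph{manufacturing} the symmetry rather than doing without it. Before localizing, it couples $Y$ to a Gaussian $\tilde Y\sim\normal(\mu,\diag\{\sigma_i^2\})$ via Sakhanenko's strong approximation; the coupling error in $\iso(Y)_i-\iso(\tilde Y)_i$ is controlled to $O((\log n)^{10/3}/n^{2/3})$ using a breakpoint-probability lemma (the expected segment length of $\iso(\tilde Y)$ near $i$ is $\gtrsim n^{2/3}/\polylog(n)$, so a partial-sum coupling error of order $\log n$ divided by segment length gives the rate). Once in the Gaussian world, Step~3 replaces the variances by the constant $\sigma_i^2$ (cost $O(\sqrt{m\log n}/n)$ from the Lipschitz $\sigma$ assumption) and the mean by its chord (cost $O((m/n)^\beta)$). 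The resulting vector $\check Y^{(i)}$ has \emph{exactly} linear mean, constant variance, and Gaussian noise, so by the reflection $(\check Y^{(i)}_{m+1+j},\check Y^{(i)}_{m+1-j})\mapsto(2\check\mu_i-\check Y^{(i)}_{m+1-j},2\check\mu_i-\check Y^{(i)}_{m+1+j})$ one gets $\EE{\iso(\check Y^{(i)})_{m+1}}=\check\mu_i$ identically. The $n^{-2/3}$ bias bound then comes entirely from approximation errors, with no need to analyze the bias of a drift-plus-noise max--min directly. This coupling-then-symmetrize step is the idea your proposal is missing.
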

\noindent We remark that in the case of Gaussian noise, 
the upper bound can be tightened to $C\left(\frac{\log n}{n}\right)^{\beta/3}$ for any $\beta\in [1,2]$, i.e., the extra power of the log term
for the case $\beta=2$ no longer appears (in the non-Gaussian case, it is likely an artifact of the proof).

To better understand the result of this theorem, consider the extreme case where we take $\beta=1$ (which, as mentioned
above, simply reduces to the Lipschitz assumption). 
In this case, the upper bound of Theorem~\ref{thm:upperbd_smooth} results in the bias bound
\[\max_{i_0 \leq i \leq n+1- i_0}\big|\EE{\muh_i}-\mu_i\big| \lesssim \sqrt[3]{\frac{\log n}{n}}.\]
which is, up to a constant, the same as the bound~\eqref{eqn:yang} proved by \citet{yang2018} to hold {\em with high probability} on the maximum entrywise error
$\big|\muh_i - \mu_i\big|$.
 In other words, this suggests that the bias may be as large as the (square root) variance. 
 
 On the other hand, if $\beta=2$, then the bias scales as $\left(\polylog n/n\right)^{2/3}$ while the high probability
  bound on the error is still $\left(\log n /n\right)^{1/3}$---the bias is vanishing relative to the error of any one draw of the data.

\subsubsection{Simulation}
To explore this scaling, we conduct a simple simulation\footnote{Code available at
 \url{http://www.stat.uchicago.edu/~rina/code/iso_bias_simulation_code.R}} to compare the case $\beta=2$ with $\beta=1$.
For these two cases, Theorem~\ref{thm:upperbd_smooth} establishes that,
at each index $i$ (bounded away from the endpoints $1$ and $n$), bias is bounded as $\asymp n^{-2/3}$ and $\asymp n^{-1/3}$, respectively, up to log factors.
We will see that this scaling is achieved by our simple examples.

\begin{figure}[t]
\includegraphics[width=\textwidth]{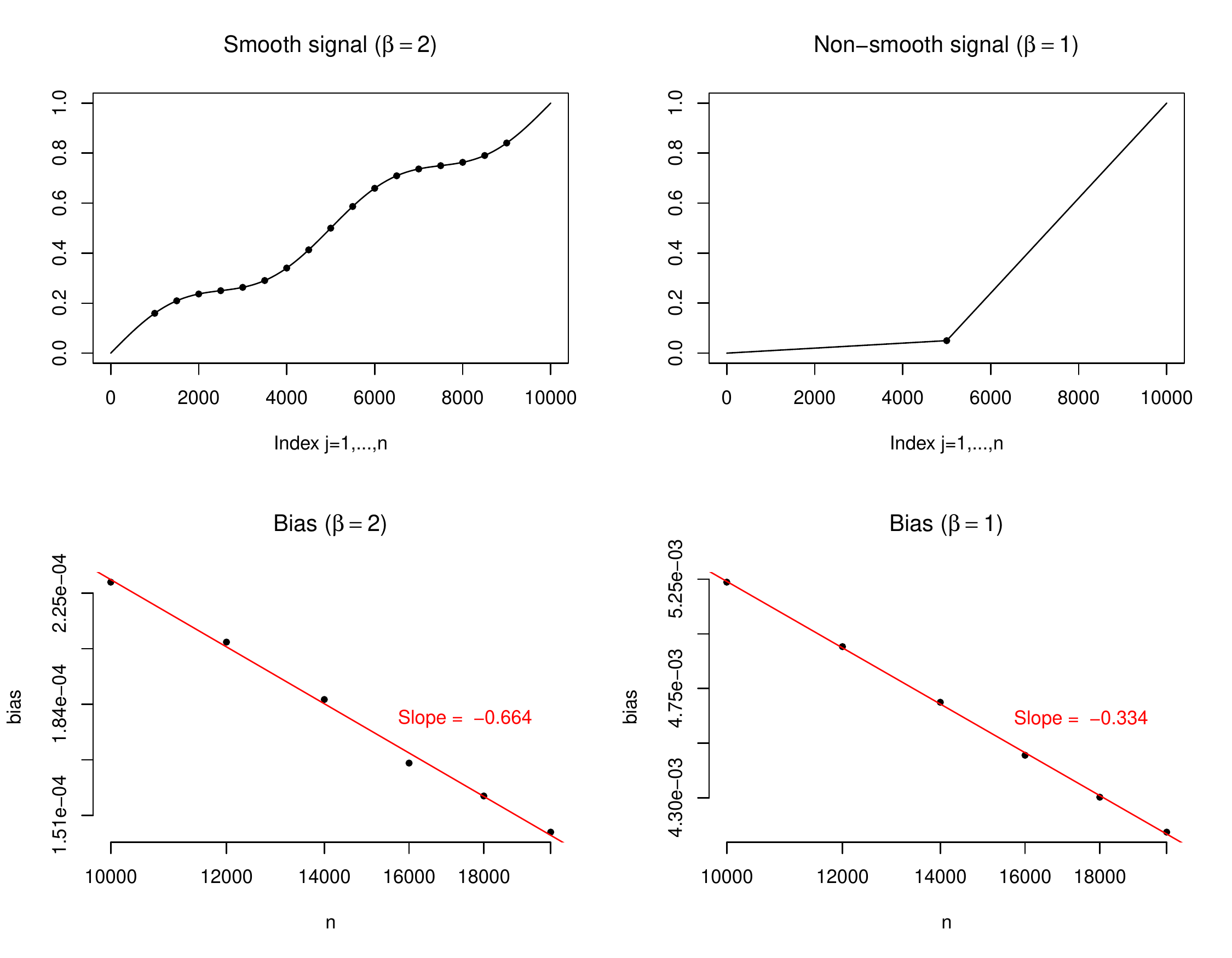}
\caption{Top: illustration of the mean vectors for the simulation, for the case $n=10000$.
The  indices for which we measure the bias are indicated
in each plot.\\
Bottom: simulation results, plotting bias against $n$ (each on the log scale), for the smooth and non-smooth case. The line and slope in each plot are the least-squares
regression line (for $\log(|\textnormal{bias}|)$ regressed on $\log(n)$).}
\label{fig:simulation}
\end{figure}

The two mean vectors, $\mu^{\textnormal{s}}$ for the smooth case and $\mu^{\textnormal{ns}}$ for the non-smooth case,
 are illustrated in the top two panels of Figure~\ref{fig:simulation}.
The smooth mean is defined with a sine wave function, while the non-smooth mean is a piecewise linear ``hinge'' shape:
\[\mu^{\textnormal{s}}_i = (i/n)+\sin(4\pi \cdot i/n)/16,\quad\quad \mu^{\textnormal{ns}}_i = \begin{cases} 0.1(i/n), &  i\leq n/2, \\ 1.9 (i/n) -0.9, &  i>n/2,\end{cases}\]
For both the smooth and non-smooth mean, writing $\mu$ 
to denote either $\mu^{\textnormal{s}}$ or $\mu^{\textnormal{ns}}$ as appropriate,
 we generate a signal $Y = \mu+\normal(0,\sigma^2\ident_n)$ for $\sigma = 0.1$, and compute $\iso(Y)$.
Our final result is the magnitude of the bias at the midpoint for the non-smooth case,
\[\textnormal{bias} =\big|\EE{\iso(Y)_i} - \mu_i\big| \textnormal{ for $i=n/2$},\]
or averaged over several points for the smooth case,
\[\textnormal{bias} =\textnormal{Mean}\Big\{\big|\EE{\iso(Y)_i}- \mu_i\big| : i = 0.1n, 0.15n, 0.2n,\dots,0.9n\Big\},\]
where $\EE{\iso(Y)}$ is estimated by averaging 500,000 trials.
We run the same procedure for each $n=10000,12000,\dots,20000$.
Our simulation results, shown in the bottom panels of Figure~\ref{fig:simulation}, verify that the
bias at the selected points indeed appears to scale as $\asymp n^{-2/3}$ for the smooth case, and as $\asymp n^{-1/3}$ for the non-smooth case.
We next turn to the question of establishing these lower bounds theoretically.

\subsection{A matching lower bound for smoothness}
We next show that, as suggested by our simulations,
 our dependence on the smoothness assumption is tight---up to constants and log factors,
 we cannot improve our dependence on the H{\"o}lder exponent $\beta$ (i.e., the power of $n$, $n^{-\beta/3}$, appearing in our upper bound, Theorem~\ref{thm:upperbd_smooth}).
 While our simulated example only verifies this at a constant number of indices $i$,
here we show a stronger result---the lower bound is attained by a constant {\em fraction} of the indices $i=1,\dots,n$.

\begin{theorem}\label{thm:lowerbd_smoothness}
Fix any parameters $L_1>L_0>0$, $M>0$, $1\leq \beta \leq 2$, and $\sigma^2>0$. Then for any $n\geq C$, 
there exists some $\mu\in\R^n$ that is  $L_1$-Lipschitz~\eqref{eqn:mu_L1}, $L_0$-strictly increasing~\eqref{eqn:mu_L0}, and $(\beta,M)$-smooth~\eqref{eqn:mu_smooth}, such that,
 for $Y\sim \normal(\mu,\sigma^2\mathbf{I}_n)$ and $\muh = \iso(Y)$, the bias satisfies
\[\big|\EE{\muh_i} - \mu_i\big| \geq C' n^{-\beta/3} (\log n)^{-5\beta/3}\]
for at least $C'' n$ many indices $i\in\{1,\dots,n\}$,
where $C,C',C''$ depend only on the parameters in our assumptions,
and not on $n$.
\end{theorem}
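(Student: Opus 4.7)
My plan is to construct a specific $\mu$ that is maximally rough within the $(\beta, M)$-H\"older class by perturbing a linear signal with a rapidly oscillating H\"older-smooth sine wave, and to show directly that isotonic regression, averaging over PAVA blocks of size $\asymp n^{2/3}$, fails to track the oscillation, leaving a bias of order $-g(i/n)$ at a constant fraction of indices.

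\textbf{Construction.} Set $L_0^\star = (L_0+L_1)/2$, pick $p = c_p n^{-1/3}(\log n)^{-5/3}$ and $A = c_A M p^\beta$, and define
\[
\mu_i = L_0^\star \cdot \frac{i}{n} + g(i/n), \qquad g(t) := A\sin(2\pi t/p).
\]
The standard bounds $\|g'\|_\infty \leq 2\pi A/p$ and $\|g''\|_\infty \leq 4\pi^2 A/p^2$ yield, for sufficiently small absolute constants $c_p, c_A$, each of the assumptions~\eqref{eqn:mu_L1}, \eqref{eqn:mu_L0}, \eqref{eqn:mu_smooth}. The chosen $A$ satisfies $A \asymp n^{-\beta/3}(\log n)^{-5\beta/3}$ (the target lower bound), and $|g(i/n)| \geq A/2$ on a constant fraction of $i \in \{1,\dots,n\}$.

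\textbf{Bias analysis.} Let $B_i = [a_i, b_i]$ denote the PAVA block containing $i$ for $\iso(Y)$, so that $\muh_i = \bar Y_{B_i} = L_0^\star (a_i+b_i)/(2n) + \bar g_{B_i} + \bar Z_{B_i}$ and
\[
\EE{\muh_i} - \mu_i = L_0^\star\bigl(\EE{(a_i+b_i)/(2n)} - i/n\bigr) + \bigl(\EE{\bar g_{B_i}} - g(i/n)\bigr) + \EE{\bar Z_{B_i}}.
\]
By periodicity $|\bar g_{[a,b]}| \leq Apn/\bigl(\pi(b-a)\bigr)$, and by the tail bound~\eqref{eqn:yang} of \citet{yang2018} the block length satisfies $b_i-a_i \gtrsim n^{2/3}/(\log n)^{1/3}$ with probability $\geq 1-n^{-C}$; hence $|\EE{\bar g_{B_i}}| = O(A/(\log n)^{4/3}) = o(A)$, so the middle summand equals $-g(i/n) + o(A)$.

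The main obstacle is to show that the block-off-centering term $L_0^\star\bigl(\EE{(a_i+b_i)/(2n)}-i/n\bigr)$ and the noise-bias term $\EE{\bar Z_{B_i}}$ are each $o(A)$ at interior indices; these are delicate because $B_i$ depends on $Z$. Morally, they are the bias of isotonic regression applied to a purely linear mean, which one expects to be extremely small (of order $1/n$ or smaller at interior indices) by a reflection symmetry $i\mapsto n+1-i$ combined with $Z\eqd -Z$. Rigorously, the argument parallels the proof of Theorem~\ref{thm:upperbd_smooth}: use the max-min representation $\muh_i = \max_{a\leq i}\min_{b\geq i}\bar Y_{[a,b]}$, apply concentration inequalities for the weighted noise averages $\bar Z_{[a,b]}$ over a dyadic family of windows, and invoke the strict monotonicity~\eqref{eqn:mu_L0} to prevent PAVA blocks from degenerating. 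Combining these bounds gives $\EE{\muh_i}-\mu_i = -g(i/n) + o(A)$, so at the $\Omega(n)$ indices with $|g(i/n)|\geq A/2$ we conclude $|\EE{\muh_i}-\mu_i|\geq A/4 \gtrsim n^{-\beta/3}(\log n)^{-5\beta/3}$, as required.
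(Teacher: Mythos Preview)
Your construction---a linear trend plus a small-amplitude, short-period sine perturbation---is exactly the construction the paper uses. The analysis, however, diverges from the paper's and contains two genuine gaps.

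\textbf{Block-length lower bound.} You claim that Yang et al.'s tail bound~\eqref{eqn:yang} yields $b_i-a_i\gtrsim n^{2/3}/(\log n)^{1/3}$ with high probability. It does not: combining~\eqref{eqn:yang} with the strict-increase assumption~\eqref{eqn:mu_L0} gives an \emph{upper} bound on the block length (if $j$ is in the same block as $i$ then $|\mu_i-\mu_j|\le 2\max_k|\muh_k-\mu_k|$, forcing $|i-j|\lesssim n^{2/3}$), not a lower bound. To show blocks are long one needs a breakpoint-probability bound such as Lemma~\ref{lem:breakpointlemma}; this is precisely what the paper develops and applies in its Step~4.

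\textbf{The ``main obstacle''.} You identify the off-centering term $L_0^\star\bigl(\EE{(a_i+b_i)/(2n)}-i/n\bigr)$ plus the noise-bias $\EE{\bar Z_{B_i}}$ as the hard part, and propose to bound them by the bias of the purely linear problem via symmetry or by paralleling Theorem~\ref{thm:upperbd_smooth}. Neither route closes the gap. First, the block $B_i$ is determined by $Y=\mu+Z$, not by $Y^{\textnormal{lin}}=\mu^{\textnormal{lin}}+Z$, so these two terms are \emph{not} the bias of isotonic regression on the linear signal; identifying them would itself require a comparison of $\iso(Y)$ with $\iso(Y^{\textnormal{lin}})$. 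Second, the reflection symmetry $i\mapsto n+1-i$, $Z\eqd -Z$ gives vanishing bias only at the midpoint, not at a constant fraction of indices. Third, even granting the identification, the best available upper bound on the linear bias is $\asymp n^{-2/3}$ up to logs (Theorem~\ref{thm:upperbd_smooth} with $\beta=2$), which is \emph{not} $o(A)$ when $\beta=2$, since then $A\asymp n^{-2/3}(\log n)^{-10/3}$; your argument would fail at the endpoint $\beta=2$.

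The paper sidesteps all of this with a two-signal pigeonhole: it never attempts to show the linear bias is small. Instead it compares $\iso(Y)_i$ directly with $\iso(Y^{\textnormal{lin}})_i$ via the min--max formula (Steps~2--3), uses the breakpoint lemma on $Y^{\textnormal{lin}}$ to make its blocks long (Step~4), and obtains $\EE{\iso(Y)_i}\le \EE{\iso(Y^{\textnormal{lin}})_i}+0.6\,b_n$ at many indices. Since $\mu_i\ge\mu^{\textnormal{lin}}_i+0.8\,b_n$ at many indices (Step~1), the triangle inequality forces at least one of $\mu$ or $\mu^{\textnormal{lin}}$ to have bias $\ge 0.1\,b_n$ at a constant fraction of indices---and both signals satisfy all the required assumptions. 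This neatly avoids any need to control the linear bias itself.
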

\noindent Note that this noise distribution, i.e.~$Z_i\stackrel{\textnormal{iid}}{\sim}\normal(0,\sigma^2)$, trivially satisfies the assumptions~\eqref{eqn:noise_assump} and~\eqref{eqn:sig_Lip} that we require in our upper bound. In other words, the lower bound result shows that our upper bound is tight for all $1\leq \beta \leq 2$, up to log factors. 

\subsection{Necessity of the strictly increasing mean}

Finally, we study the role of the strictly increasing assumption for the mean $\mu$ \eqref{eqn:mu_L0}. 
\citet{wright1981} studied this problem in a different context, considering
a signal $\mu_i = f(i/n)$ where the function $f$ is monotone nondecreasing and satisfies
\begin{equation}\label{eqn:alpha_wright}|f(t) - f(t_0)|\asymp |t-t_0|^{\alpha}\textnormal{ as }t\rightarrow t_0\end{equation}
at some point $t_0\in(0,1)$, for some $\alpha\geq 1$. 
For example, if $\alpha$ is an integer, this is satisfied if $f$ is required to have $f^{(k)}(t_0)=0$
for all $k=1,\dots,\alpha-1$, and $f^{(\alpha)}(t_0)>0$,
where $f^{(k)}$ denotes the $k$th derivative of $f$.
In this setting, writing $t_0 = i_0/n$, \citet[Theorem 1]{wright1981} establishes
that the rescaled error $n^{\alpha/(2\alpha+1)}\big(\muh_{i_0}-\mu_{i_0}\big)$ converges to some fixed distribution.
In other words, the error $\big|\muh_{i_0}-\mu_{i_0}\big|$ 
is on the scale $n^{-\alpha/(2\alpha+1)}$.

\begin{remark}[Smoothness condition versus Wright's condition] The H{\"o}lder smoothness condition~\eqref{eqn:function_holder_smooth}
and Wright's condition~\eqref{eqn:alpha_wright} on functions $f:[0,1]\rightarrow\R$ appear very similar initially, but have different implications. Consider the
case $\alpha = \beta=2$. If the function $f$ is twice differentiable, the H{\"o}lder smoothness condition~\eqref{eqn:function_holder_smooth}
is equivalent to requiring that $|f''(t)|$ is bounded (for all $t$), while Wright's condition~\eqref{eqn:alpha_wright}
instead requires that $|f''(t_0)|$ is bounded and additionally $f'(t_0)=0$. In other words, the H{\"o}lder condition requires that $f$ is smooth,
while Wright's condition requires that $f$ is both smooth and flat. 
\end{remark}

Our next result establishes that, in the worst case, the bias of the isotonic regression estimator is on the same order up to log factors---that is,
for any $\alpha\geq 1$, we construct a signal $\mu$ satisfying \citet{wright1981}'s condition~\eqref{eqn:alpha_wright}
such that $\big|\EE{\muh_{i_0}}-\mu_{i_0}\big|$ scales as $n^{-\alpha/(2\alpha+1)}$ up to log factors.

\begin{theorem}\label{thm:lowerbd_strictincr}

Fix any parameters $L_1>0$, $M>0$, and $\sigma^2>0$. Then for any $n\geq C$, $\alpha \geq 1$, 
there exists some $\mu\in\R^n$ that is $L_1$-Lipschitz~\eqref{eqn:mu_L1}, monotone nondecreasing
(i.e., $L_0$-increasing~\eqref{eqn:mu_L0} with $L_0=0$), 
and is equal to $\mu_i = f(i/n)$ for some function $f$ satisfying the condition~\eqref{eqn:alpha_wright} at $t_0=0.5$,
such that,
 for $Y\sim \normal(\mu,\sigma^2\mathbf{I}_n)$ and $\muh = \iso(Y)$, the bias satisfies
\[\big|\EE{\muh_{i_0}} - \mu_{i_0}\big| \geq C' (n (\log n)^2)^{-\frac{\alpha}{2\alpha+1}}\]
at the index $i_0 = n/2$, i.e., the index satisfying $t_0 = i_0/n$. (Here $C,C'$  depend only on the parameters in our assumptions,
and not on $n$.)
\end{theorem}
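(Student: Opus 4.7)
The plan has three parts: construct an asymmetric signal $\mu$ with Wright behavior at $t_0=1/2$, show that the asymmetry produces a positive drift on a critical window around $i_0$, and relate this drift to $\EE{\muh_{i_0}}$ via a switching/stopping argument. Asymmetry about $t_0$ is essential: if $f$ were odd about $t_0$, then for $n$ odd with $i_0=(n+1)/2$, setting $Y'_i:=2f(t_0)-Y_{n+1-i}$ gives $Y'\eqd Y$ (using i.i.d.\ Gaussian noise), while the reversal--negation equivariance $\iso(Y')_{i_0}=2f(t_0)-\iso(Y)_{i_0}$ forces $\muh_{i_0}-\mu_{i_0}$ to be symmetric in distribution, hence unbiased. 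So I would take, for constants $c_+>c_-\geq 0$ with $c_+\alpha a^{\alpha-1}\leq L_1$ and some small $a>0$,
\[
f(t)=\begin{cases} c_+(t-t_0)^\alpha, & t\in[t_0,t_0+a],\\ -c_-(t_0-t)^\alpha, & t\in[t_0-a,t_0),\end{cases}
\]
and extend $f$ linearly on $[0,1]\setminus[t_0-a,t_0+a]$ with slope at most $L_1$. This $f$ is monotone nondecreasing, $L_1$-Lipschitz, and satisfies~\eqref{eqn:alpha_wright} with asymmetric constants $c_+\neq c_-$.

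Next, I would isolate the critical scale $\ell_n:=\lceil c_0(n/\log n)^{2\alpha/(2\alpha+1)}\rceil$. A direct computation with the above $f$ shows that, for $\ell$ of order $\ell_n$,
\[
\bar\mu_{i_0-\ell:i_0+\ell}-\mu_{i_0} \;=\; \frac{c_+-c_-}{2(\alpha+1)}\Bigl(\tfrac{\ell}{n}\Bigr)^\alpha(1+o(1)),
\]
so at the critical scale the window-averaged mean exceeds $\mu_{i_0}$ by $\Theta\bigl(n^{-\alpha/(2\alpha+1)}(\log n)^{-2\alpha/(2\alpha+1)}\bigr)$. I would then turn to the standard switching/stopping characterization that underlies Wright-type arguments: there exist (data-dependent) indices $k^\ast\leq i_0\leq j^\ast$ such that $\muh_{i_0}=\bar Y_{k^\ast:j^\ast}$, with $(k^\ast,j^\ast)$ determined by the behavior of the partial-sum process $S_j=\sum_{m=1}^jY_m$. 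The key claim is that, with probability bounded away from $0$, one has $i_0-C\ell_n\leq k^\ast$ and $j^\ast\leq i_0+C\ell_n$ for a fixed $C$; this is where the Gaussian noise on the scale $\sqrt{\ell_n}$ (contributing a factor $(\log n)^{1/(2\alpha+1)}$) must be balanced against the convex drift of $\sum_m\mu_m$, and the $(\log n)^{2/(2\alpha+1)}$ slack absorbed into $\ell_n$ is calibrated exactly so that the two scales match. On this event, $\EEst{\bar Y_{k^\ast:j^\ast}}{k^\ast,j^\ast}=\bar\mu_{k^\ast:j^\ast}$ is positive of the claimed order.

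The final step is upgrading this high-probability positive lower bound to a lower bound on the expectation, which requires ruling out cancellations from rare large negative excursions of $\muh_{i_0}$. For this I would invoke the tail bound~\eqref{eqn:yang} of \citet{yang2018}, which requires only the Lipschitz condition~\eqref{eqn:mu_L1} and subgaussian noise (not the strict monotonicity~\eqref{eqn:mu_L0}); it gives $|\muh_{i_0}-\mu_{i_0}|\lesssim n^{-1/3}\polylog n$ with probability at least $1-n^{-10}$, so the bad-event contribution to the expectation is at most $n^{-10}\cdot n^{-1/3}\polylog n$, which is negligible compared to the target. The main technical obstacle I foresee is quantifying, non-asymptotically and with the correct polylogarithmic slack, the localization of the stopping interval $(k^\ast,j^\ast)$ at the critical scale $\ell_n$; following the classical Wright-type analysis with the extra $\log n$ room provided in the target rate should make this step tractable.
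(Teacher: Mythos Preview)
Your proposal has a real gap at the step that converts the positive drift of window averages into a bias lower bound. The identity $\EEst{\bar Y_{k^\ast:j^\ast}}{k^\ast,j^\ast}=\bar\mu_{k^\ast:j^\ast}$ is false: the endpoints $(k^\ast,j^\ast)$ are functions of $Y$ via the min--max representation, so conditioning on them does not decouple the noise, and the conditional mean of $\bar Y_{k^\ast:j^\ast}$ can differ from $\bar\mu_{k^\ast:j^\ast}$ by exactly the order you are trying to detect---this selection effect is the whole reason the bias of isotonic regression is nontrivial. Your localization claim is also one-sided: you only assert $j^\ast-k^\ast\le 2C\ell_n$, but for $\bar\mu_{k^\ast:j^\ast}-\mu_{i_0}$ to be positive of the target order you need the interval long \emph{and} not skewed too far left. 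With $c_+>c_-$ the signal is steeper to the right of $i_0$, which tends to push the constant block further left than right, and $\bar\mu_{k:j}-\mu_{i_0}$ becomes negative once $i_0-k\gg j-i_0$. So even granting localization and Yang's tail bound, the argument as written does not close.

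The paper avoids these issues entirely by a two-signal comparison rather than analyzing a single asymmetric signal. It builds $\mu^{(0)},\mu^{(1)}$ that agree outside a window of width $2n\epsilon_n$ around $i_0$ with $\mu^{(0)}_{i_0}-\mu^{(1)}_{i_0}=c_n$, and couples them via the \emph{same} noise $Z$. The min--max formula then yields the deterministic inequality $\iso(Y^{(0)})_{i_0}\le\iso(Y^{(1)})_{i_0}+c_n\,\bar\Delta_{j:k}$ for a specific $(j,k)$ with $k=k_{i_0}(Y^{(1)})$, where $\Delta=(\mu^{(0)}-\mu^{(1)})/c_n$ is supported on the short window. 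Because $\mu^{(1)}$ is exactly flat to the right of $i_0$, the breakpoint lemma gives $\PP{k_{i_0}(Y^{(1)})\ge i_0+10n\epsilon_n}\ge 1/2$, which forces $\bar\Delta_{j:k}\le 0.2$ on that event; combined with the trivial bound $\bar\Delta\le 1$ otherwise, one gets $\EE{\iso(Y^{(0)})_{i_0}}\le\EE{\iso(Y^{(1)})_{i_0}}+0.6c_n$, and the triangle inequality pins bias $\ge 0.2c_n$ on one of the two signals. The key point is that the inequality between the two estimators holds \emph{pointwise} in $Z$, so taking expectations is legitimate and no selection-bias argument is needed.
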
   
\noindent 
Up to constants and log factors, this result matches the upper bound proved by \citet{wright1981}.

We remark that, for $\alpha\geq 2$, the signal $\mu$ constructed in the
proof of the theorem is $(\beta,M)$-smooth with $\beta=2$.
(For $\alpha<2$, the signal $\mu$ constructed in the proof is instead $(\beta,M)$-smooth with $\beta=\alpha$.)
Setting $\alpha=2$, for example, we obtain a lower bound on bias that scales as $n^{-2/5}$,
while as $\alpha\rightarrow\infty$ the scaling approaches $n^{-1/2}$ (up to log factors).
We can compare this to our results in Theorems~\ref{thm:upperbd_smooth} and~\ref{thm:lowerbd_smoothness},
where we establish a faster scaling $n^{-2/3}$ for the worst-case bias (up to log factors) for signals $\mu$ with H{\"o}lder
exponent $\beta=2$ that are also assumed to be strictly increasing. The gap between these powers of $n$ highlights
the role of the strict monotonicity condition in the isotonic regression problem.

\section{Proofs of main results}

\subsection{Properties of isotonic regression}
Before we proceed with our proofs, it will be useful to recall some well-known properties of the isotonic projection, $y\mapsto\iso(y)$
(for background see, e.g., \citet[Chapter 1]{barlow1972}).

First, the individual entries of $\iso(y)$ can be expressed with the useful ``min-max'' formula \citep[(1.9)]{barlow1972}: for any $y\in\R^n$ and any index $i$,
\begin{equation}\label{eqn:minmax}
\iso(y)_i  = \max_{1\leq j\leq i}\min_{i\leq k\leq n} \overline{y}_{j:k},\end{equation}
where 
\[\overline{y}_{j:k} = \frac{1}{k-j+1}\sum_{\ell=j}^k y_\ell\]
is the average of the subvector $(y_j,\dots,y_k)$ of $y$, and the max and min are attained at $j = j_i(y)$ and $k=k_i(y)$ where
\begin{equation}\label{eqn:minmax_at_jk}
j_i(y) = \min\{j : \iso(y)_j = \iso(y)_i\}\textnormal{ and }k_i(y) = \max\{k : \iso(y)_k = \iso(y)_i\},
\end{equation}
the endpoints of the constant interval of $\iso(y)$ containing index $i$.

Second, using the min-max definition in (\ref{eqn:minmax}), it is trivial to see that isotonic regression commutes with shifts in location and scale: for any $y\in\R^n$, any $a\in\R$, and any $b>0$, it holds that~\citep[(1.14)+(1.15)]{barlow1972}
\begin{equation}\label{eqn:iso_locationscale}\iso(a\cdot \mathbf{1}_n + b\cdot y) = a\cdot \mathbf{1}_n + b\cdot \iso(y).\end{equation}

Next, if we run isotonic regression on a subvector $(y_a,\dots,y_b)$ of $y$, this can only add breakpoints relative to $y$. Specifically,
for any $y\in\R^n$, and any indices $1 \leq a \leq i \leq b \leq n$,
define
\[\tilde{y} = y_{[a:b]} = \big(y_a,y_{a+1},\dots,y_b\big)\in\R^{b-a+1}.\]
Then
\begin{equation}\label{eqn:subsequence_breakpoint}\iso(\tilde{y})_{i-a+1}= \iso(\tilde{y})_{i-a+2} \quad \Rightarrow \quad \iso(y)_i = \iso(y)_{i+1}.\end{equation}
(Note that indices $i-a+1,i-a+2$ in the subvector $\tilde{y}$ correspond to indices $i,i+1$ in the full vector $y$.)
Finally, on the same subvector $\tilde{y}=(y_a,\dots,y_b)$, for any index $i$ with $a\leq i\leq b$, 
\begin{multline}\label{eqn:subsequence_equal}
\textnormal{If $\iso(y)_{a-1}\neq \iso(y)_i$ (or $a=1$) and $\iso(y)_i\neq \iso(y)_{b+1}$ (or $b=n$),}\\\textnormal{ then $\iso(y)_i = \iso(\tilde{y})_{i-a+1}$}.\end{multline}
That is, truncating the sequence $y$ at a breakpoint of $\iso(y)$ will not affect the estimated values. 
These last two properties~\eqref{eqn:subsequence_breakpoint} and~\eqref{eqn:subsequence_equal} follow from
the definition of the ``pool adjacent violators'' (PAVA) algorithm for computing $\iso(y)$ \citep[pg.~13--15]{barlow1972}.
\subsection{Breakpoint lemma}\label{sec:breakpointlemma}

Before proving our theorems, we first present a result bounding the probability of a breakpoint occurring at any particular location in a Gaussian isotonic regression problem.
We will use this lemma to prove both our upper and lower bounds.

\begin{lemma}[Breakpoint lemma]\label{lem:breakpointlemma}
Let $Y\sim \normal\big((\mu_1,\dots,\mu_n),\diag\{\sigma^2_1,\dots,\sigma^2_n\}\big)$. Fix any integer $m\geq 2$ and any index $i$ with $m \leq i \leq n-m$. 
Define
\[\bar{\mu} = \frac{1}{2m} \sum_{j=i-m+1}^{i+m} \mu_j\textnormal{\quad and\quad }\bar{\sigma}^2 = \frac{1}{2m} \sum_{j=i-m+1}^{i+m} \sigma^2_j,\]
and assume that
\begin{equation}\label{eqn:breakpointlemma_nearlyconstant}
\sum_{j=i-m+1}^{i+m} \left(\frac{\mu_j - \bar{\mu}}{\bar{\sigma}}\right)^2 \leq \frac{C_1}{ \log m}\textnormal{\quad and\quad }\max_{i-m+1 \leq j \leq i+m} \left|\frac{\sigma^2_j - \bar{\sigma}^2}{\bar{\sigma}^2}\right|\leq \frac{C_2}{\sqrt{m \log m}}.\end{equation}
Then
\[\PP{\iso(Y)_i \neq \iso(Y)_{i+1}} \leq \frac{C_3\log m}{m},\]
where $C_3$ depends only on $C_1,C_2$ and not on $m$ or $n$.
\end{lemma}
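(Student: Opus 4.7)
The plan is to reduce to a local, iid, constant-variance Gaussian problem and invoke a classical breakpoint estimate for the iid case. Setting $\tilde Y = (Y_{i-m+1}, \ldots, Y_{i+m}) \in \R^{2m}$, property~\eqref{eqn:subsequence_breakpoint} applied to this subvector gives
\[\PP{\iso(Y)_i \neq \iso(Y)_{i+1}} \leq \PP{\iso(\tilde Y)_m \neq \iso(\tilde Y)_{m+1}},\]
and~\eqref{eqn:iso_locationscale} lets us subtract $\bar\mu$ so that WLOG the shifted mean $\tilde\mu_j = \mu_{i-m+j} - \bar\mu$ satisfies $\sum_j \tilde\mu_j^2 \leq C_1 \bar\sigma^2/\log m$. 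A direct manipulation of the min-max formula~\eqref{eqn:minmax} together with the level-set characterization~\eqref{eqn:minmax_at_jk} shows that a breakpoint at position $m$ in $\iso(\tilde Y)$ occurs if and only if
\[\max_{1 \leq a \leq m} \bar{\tilde Y}_{a:m} < \min_{m+1 \leq b \leq 2m} \bar{\tilde Y}_{m+1:b},\]
and the two sides depend on disjoint halves of $\tilde Y$ and are therefore independent.

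Next I would couple $\tilde Y$ to a clean iid reference $\tilde Y^\circ = \bar\sigma W$ with $W \iidsim \normal(0, \ident_{2m})$, using the representation $\tilde Y_j = \tilde\mu_j + \sigma_j W_j$. For each window $[a,b]$, the discrepancy $\bar{\tilde Y}_{a:b} - \bar{\tilde Y^\circ}_{a:b}$ splits into a deterministic mean shift of magnitude at most $\bar\sigma\sqrt{C_1/\log m}\cdot(b-a+1)^{-1/2}$ (by Cauchy--Schwarz on the $\ell^2$-bound for $\tilde\mu$) and a Gaussian variance-perturbation term of standard deviation at most $\bar\sigma\cdot (C_2/\sqrt{m\log m})\cdot(b-a+1)^{-1/2}$. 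Both are small multiples of the baseline standard deviation $\bar\sigma/\sqrt{b-a+1}$ of $\bar{\tilde Y^\circ}_{a:b}$. A union bound over the $O(m^2)$ windows combined with standard Gaussian tail estimates then shows that the probabilities of the ``$\max < \min$'' event for $\tilde Y$ and for $\tilde Y^\circ$ agree up to an additive error of $O(\log m / m)$.

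For the iid reference problem, the classical estimate that the expected number of level sets of $\iso$ applied to iid continuous data of length $n$ grows only as $O(\log n)$ gives an expected total of $O(\log m)$ breakpoints across the $2m-1$ positions of $\iso(\tilde Y^\circ)$. A symmetry or monotonicity argument comparing the center to other interior positions then bounds the breakpoint probability at position $m$ by a constant times the average, which is the desired $O(\log m / m)$. The hardest part will be the coupling step: since $\max_{a \leq m} \bar{\tilde Y}_{a:m}$ is driven by short-window averages where $b-a$ is small (and whose standard deviations are not themselves decaying with $m$), the perturbations at those windows must be controlled uniformly, and the hypotheses $C_1/\log m$ and $C_2/\sqrt{m \log m}$ in~\eqref{eqn:breakpointlemma_nearlyconstant} are tuned precisely so that the union bound over all $O(m^2)$ windows leaves the perturbation contribution at the target order $O(\log m / m)$ rather than inflating it by extra log factors.
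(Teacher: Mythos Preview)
Your overall skeleton matches the paper's: localize to the length-$2m$ subvector, transfer to the iid constant-variance Gaussian, and then handle that iid case via the $O(\log m)$ expected-breakpoint count. But both of the last two steps have real gaps as you've written them.

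\textbf{The transfer step.} Your coupling establishes that each window average $\bar{\tilde Y}_{a:b}$ differs from $\bar{\tilde Y^\circ}_{a:b}$ by at most $\epsilon_{a,b} \lesssim \bar\sigma/\sqrt{(b-a+1)\log m}$ (mean part, deterministically) plus a much smaller random term. But this does \emph{not} show that the two breakpoint probabilities agree to within $O(\log m/m)$. Writing $L^\circ = \max_a \bar{\tilde Y^\circ}_{a:m}$ and $R^\circ = \min_b \bar{\tilde Y^\circ}_{m+1:b}$, your argument yields only
\[
\PP{B} \;\leq\; \PP{B^\circ} \;+\; \PP{0 \leq L^\circ - R^\circ < 2\epsilon} \;+\; (\text{tail terms}),
\]
so you would need the anti-concentration bound $\PP{L^\circ - R^\circ \in [0, 2\epsilon)} = O(\log m/m)$ with $\epsilon \asymp \bar\sigma/\sqrt{\log m}$. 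Since $L^\circ - R^\circ$ lives on the scale $\bar\sigma$, this asks its distribution to put mass $O(\log m/m)$ on an interval of width $\asymp \bar\sigma/\sqrt{\log m}$ --- a nontrivial statement that a union bound over windows cannot deliver. The paper sidesteps this entirely by a change-of-measure argument: under the hypotheses~\eqref{eqn:breakpointlemma_nearlyconstant}, the likelihood ratio between the law of $\tilde Y$ and that of $\normal(\bar\mu\mathbf{1}_{2m}, \bar\sigma^2 \ident_{2m})$ is bounded by a constant on an event of probability $1 - O(m^{-1})$, giving $\PP{\tilde Y \in \mathcal{A}} \leq C\big(\PP{\tilde Y^\circ \in \mathcal{A}} + m^{-1}\big)$ for \emph{every} measurable $\mathcal{A}$. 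The $C_1/\log m$ and $C_2/\sqrt{m\log m}$ scalings are tuned precisely for this (the first controls the Cameron--Martin exponent for the mean shift, the second the variance-ratio correction), not for a window-wise union bound.

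\textbf{The iid case.} It is not clear that the midpoint breakpoint probability is bounded by a constant times the average; a direct ``symmetry or monotonicity'' comparison among interior positions of the same sequence is not standard and you have not supplied one. The paper instead uses a pigeonhole-plus-embedding trick: by Andersen's result, in a length-$m$ iid Gaussian there exists \emph{some} index $i_\star \in \{1, \ldots, m-1\}$ with breakpoint probability at most $\log m / (m-1)$; one then embeds that length-$m$ vector as the subvector of the length-$2m$ vector occupying positions $m - i_\star + 1, \ldots, 2m - i_\star$, so that position $i_\star$ of the subvector aligns with the midpoint $m$, and applies~\eqref{eqn:subsequence_breakpoint} once more. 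This avoids ever needing to know which position actually minimizes the breakpoint probability.
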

\noindent In other words, if the means $\mu_j$ and variances $\sigma^2_j$ are approximately constant near index $i$, then the probability of a breakpoint
at index $i$ is low.
To gain intuition for the consequences of this lemma,
consider a simple setting where the mean $\mu$ is $L_1$-Lipschitz~\eqref{eqn:mu_L1}, and the variances are constant, $\sigma^2_i=\sigma^2>0$.
In this case, the conditions~\eqref{eqn:breakpointlemma_nearlyconstant} are satisfied for $m\asymp n^{2/3}/(\log n)^{1/3}$,
and the probability of a breakpoint at a given index $i$ with $m\leq i\leq n-m$ is therefore $\lesssim (\log n)^{4/3}/n^{2/3}$. We can then expect constant segments
of $\iso(Y)$ to have length $\gtrsim n^{2/3}/(\log n)^{4/3}$.
If instead the mean is constant with $\mu_1=\dots=\mu_n$ (and the variance is again constant), then the conditions~\eqref{eqn:breakpointlemma_nearlyconstant} are satisfied for $m\asymp n$; in this case we can expect
$\iso(Y)$ to have constant segments of length $\gtrsim n/(\log n)$.

In order to prove this result, we first consider the case that $Y$ is standard Gaussian. We will use the following classical result:
\begin{lemma}[\cite{Andersen_1954}]\label{lem:Andersen}
Let $W\sim\normal(\mathbf{0}_m,\mathbf{I}_m)$ for any $m\geq 1$. Then the number of piecewise constant segments of $\iso(W)$, denoted by $N(W)$, is distributed as
\[N(W)\eqd I_1 + \dots + I_m,\]
where $I_1,\dots,I_m$ are independent Bernoulli random variables with $\PP{I_j = 1} = 1/j$.
\end{lemma}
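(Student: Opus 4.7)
The plan is to prove this classical identity of Sparre Andersen by identifying $N(W)$ geometrically with the number of edges of a greatest convex minorant, and then matching distributions through a cyclic-symmetry / exchangeability argument rather than through a naive conditioning. Setting $S_0 := 0$ and $S_k := W_1 + \cdots + W_k$, the min-max formula~\eqref{eqn:minmax} shows that $\iso(W)_k$ equals the slope of the greatest convex minorant (GCM) of the polygonal path through $(0, S_0), (1, S_1), \ldots, (m, S_m)$ on the interval containing $k$. Constant pieces of $\iso(W)$ therefore correspond bijectively to edges of this GCM, so $N(W)$ equals the number of GCM edges, equivalently $1$ plus the number of interior GCM vertices in $\{1, \ldots, m-1\}$.

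The target is the probability-generating function identity $\EE{z^{N(W)}} = \prod_{j=1}^m (j - 1 + z)/j$, which is exactly the pgf of $I_1 + \cdots + I_m$. I would prove it by induction on $m$. The base case $m = 1$ is trivial. For the inductive step I would introduce $T = T(W) \in \{1, \ldots, m\}$, the location of the first GCM vertex after the origin---equivalently, the a.s.\ unique minimizer of $S_k/k$ over $k \in \{1, \ldots, m\}$. Property~\eqref{eqn:subsequence_equal} applied at the breakpoint $T$ decomposes $\iso(W)$ into a single constant block on $[1, T]$ followed by the isotonic regression of the tail, yielding the almost-sure identity $N(W) = 1 + N(W_{T+1}, \ldots, W_m)$. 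The distribution of $T$ can then be extracted via the cycle lemma: cyclic shifts of $(W_1, \ldots, W_m)$ are distribution-preserving under iid Gaussian coordinates, and for any realization with distinct slopes exactly one of the $m$ cyclic shifts places the global slope minimum at position $m$, giving $\PP{T = m} = 1/m$ and, via the same cyclic partition, an explicit formula for $\PP{T = k}$ for every $k$.

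The main obstacle is that conditioning on $\{T = k\}$ couples the prefix and suffix of $W$---the suffix $(W_{k+1}, \ldots, W_m)$ given $T = k$ is \emph{not} iid Gaussian, since $T = k$ imposes joint inequalities of the form $W_{k+1} + \cdots + W_j > \frac{j-k}{k} S_k$ for every $j > k$---so the naive induction $\phi_m(z) = z \sum_k \PP{T=k}\, \phi_{m-k}(z)$ does not follow from independence alone. The remedy, essentially Sparre Andersen's~\citep{Andersen_1954} cyclic-symmetry trick, is to work with all $m$ cyclic shifts of $W$ simultaneously and use their joint exchangeability to extract the joint law of $(T, N(W_{T+1:m}))$ without conditional independence; this yields the pgf recursion $\phi_m(z) = \tfrac{m - 1 + z}{m}\, \phi_{m-1}(z)$, which telescopes to the target product. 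An alternative route bypasses the recursion entirely by exhibiting a direct bijection, via the Feller coupling, between GCM edges of a Gaussian walk and records of an auxiliary iid continuous sequence, for which the independent Bernoulli structure is classical.
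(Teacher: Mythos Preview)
The paper does not prove this lemma; it is quoted as a classical result of \citet{Andersen_1954} and used as a black box to bound the expected number of breakpoints in the proof of Lemma~\ref{lem:breakpoint_standardnormal}. There is no argument in the paper to compare your proposal against.

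As a sketch of the classical argument, your outline is sound. The identification of $N(W)$ with the number of edges of the greatest convex minorant of the partial-sum walk $(0,S_0),\dots,(m,S_m)$ via the min-max formula~\eqref{eqn:minmax} is correct, the target generating function $\prod_{j=1}^m (j-1+z)/j$ is right, and you correctly flag the main obstacle---conditioning on the first-edge length $T$ destroys the iid structure of the tail, so the naive recursion cannot be justified by independence. Where the proposal is thin is exactly at the step you yourself mark as delicate: getting from ``work with all $m$ cyclic shifts simultaneously'' to the concrete recursion $\phi_m(z)=\tfrac{m-1+z}{m}\,\phi_{m-1}(z)$ is the entire combinatorial content of the theorem, and you have described the strategy without executing it. One clean way to fill this in is to show, conditionally on the multiset $\{W_1,\dots,W_m\}$, that the GCM block decomposition of $[m]$ is equidistributed with the cycle decomposition of a uniformly random permutation of $[m]$; the independent Bernoulli structure of $N(W)$ then follows from the standard Feller/Chinese-restaurant description of cycle counts. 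Your alternative route via records is also viable.
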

\noindent This result allows us to prove the breakpoint lemma for a standard Gaussian:
\begin{lemma}\label{lem:breakpoint_standardnormal}
Let $Y\sim\normal\big(\mathbf{0}_{2m},\mathbf{I}_{2m}\big)$ for $m\geq 2$. 
Then
$\PP{\iso(Y)_m<\iso(Y)_{m+1}} \leq \frac{\log m}{m-1}$.
\end{lemma}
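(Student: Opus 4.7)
The plan is to reduce the probability of interest to an event involving only the two halves of $Y$ separately, then apply Andersen's lemma (Lemma~\ref{lem:Andersen}) together with a symmetry-and-comparison argument.

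First, by the min-max characterization \eqref{eqn:minmax} and the subsequence-equality property \eqref{eqn:subsequence_equal}, the event $\{\iso(Y)_m < \iso(Y)_{m+1}\}$ is equivalent to $\{A < B\}$, where $A := \max_{1 \leq j \leq m} \bar{Y}_{j:m} = \iso(Y^{(1)})_m$ and $B := \min_{m+1 \leq k \leq 2m} \bar{Y}_{m+1:k} = \iso(Y^{(2)})_1$, with $Y^{(1)} := (Y_1, \ldots, Y_m)$ and $Y^{(2)} := (Y_{m+1}, \ldots, Y_{2m})$. Since $A$ and $B$ depend on disjoint halves, they are independent; and by the reversal-plus-sign-flip symmetry of iid standard normals, $-A \eqd B$.

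Second, applying Andersen's lemma to $Y$ directly yields $\EE{N(Y)} = H_{2m} := \sum_{j=1}^{2m} 1/j$. Writing $p_k := \PP{\iso(Y)_k < \iso(Y)_{k+1}}$ for $k = 1, \ldots, 2m-1$, this rewrites as $\sum_{k=1}^{2m-1} p_k = H_{2m} - 1$, while the reversal-plus-sign-flip symmetry of $Y$ gives the palindromic identity $p_k = p_{2m-k}$.

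The key technical step is then to show $p_m = \min_k p_k$. Using the equivalent representation $p_k = \PP{A^{(k)} + A^{(2m-k)\prime} < 0}$, where $A^{(\ell)} := \max_{1 \leq j \leq \ell} S_j/j$ for partial sums $S_j$ of iid standard normals and $A^{(\ell)\prime}$ is an independent copy built from the other half, I would couple the sequences on a common probability space. Since $A^{(\ell)}$ is monotone non-decreasing in $\ell$ and approaches the a.s.~finite limit $\sup_{\ell \geq 1} S_\ell/\ell$ with diminishing returns, the stochastic comparison $A^{(k)} + A^{(2m-k)\prime} \leq_{\textnormal{st}} A^{(m)} + A^{(m)\prime}$ should hold for every $k \leq m$, implying $p_k \geq p_m$. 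Combining with the Andersen identity gives $(2m - 1) p_m \leq H_{2m} - 1$, and an elementary calculation (using $H_{2m} - 1 \leq \log 2 + \log m$ and $\log 2 \cdot (m-1)/m \leq \log m$ for $m \geq 2$) then yields the claimed $p_m \leq \log m/(m-1)$.

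The main obstacle I expect is the stochastic domination argument in the third step. Establishing $A^{(k)} + A^{(2m-k)\prime} \leq_{\textnormal{st}} A^{(m)} + A^{(m)\prime}$ cleanly requires a careful coupling that exploits the diminishing-increments behavior of $A^{(\ell)}$ as a function of $\ell$, and will likely be the technical core of the proof.
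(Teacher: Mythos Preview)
Your reduction to $p_k = \PP{A^{(k)} + A^{(2m-k)\prime} < 0}$ is correct, and the conjecture $p_m = \min_k p_k$ is plausible, but the stochastic-domination step is a genuine gap, not a technicality. ``Diminishing returns'' of the increments $A^{(\ell+1)}-A^{(\ell)}$ is not a precise statement, and even if one proved that these increments are stochastically decreasing in $\ell$, that would \emph{not} imply $A^{(k)}+A^{(2m-k)\prime}\leq_{\textnormal{st}} A^{(m)}+A^{(m)\prime}$: under the natural coupling you would need $A^{(2m-k)\prime}-A^{(m)\prime}\leq A^{(m)}-A^{(k)}$ almost surely, but the two sides live on independent copies, so this cannot hold except in degenerate cases. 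Some genuinely new ``stochastic concavity'' argument for $\ell\mapsto A^{(\ell)}$ would be required, and you have not supplied one. As written, the proposal reduces the lemma to a claim at least as hard as the lemma itself.

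The paper avoids the minimum claim entirely with a two-line trick. Apply Andersen's lemma to a vector $W\sim\normal(\mathbf{0}_m,\mathbf{I}_m)$ of length $m$ (not $2m$): the expected number of breakpoints is $\sum_{j=1}^m 1/j - 1\leq \log m$, so by pigeonhole there is \emph{some} index $i\in\{1,\dots,m-1\}$ with $\PP{\iso(W)_i<\iso(W)_{i+1}}\leq \frac{\log m}{m-1}$. Now slide a length-$m$ window inside $Y$ so that this particular index aligns with the midpoint: set $\tilde Y=(Y_{m-i+1},\dots,Y_{2m-i})\eqd W$, for which positions $i,i+1$ in $\tilde Y$ correspond to $m,m+1$ in $Y$. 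The subsequence-breakpoint property~\eqref{eqn:subsequence_breakpoint} gives the inclusion $\{\iso(Y)_m<\iso(Y)_{m+1}\}\subseteq\{\iso(\tilde Y)_i<\iso(\tilde Y)_{i+1}\}$, and the bound follows. No comparison among the $p_k$ is ever needed; the key observation is simply that enlarging the ambient sequence can only \emph{remove} breakpoints, so the midpoint of the long sequence inherits the small breakpoint probability from wherever it happens to occur in the short one.
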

\begin{proof}[Proof of Lemma~\ref{lem:breakpoint_standardnormal}]
From Lemma~\ref{lem:Andersen}, for a sequence $W\sim\normal(\mathbf{0}_m,\mathbf{I}_m)$, the expected number of breakpoints in $\iso(W)$ 
is given by
\[\EE{\sum_{i=1}^{m-1}\One{\iso(W)_i < \iso(W)_{i+1}}} = \EE{N(W)-1} \leq \left(1 + \dots + \frac{1}{m}\right) - 1 \leq \log(m).\] 
Therefore, there must be some $i\in\{1,\dots,m-1\}$  such that $\PP{\iso(W)_i < \iso(W)_{i+1}} \leq \frac{\log m}{m-1}$. Next, we let $\tilde{Y} = (\tilde{Y}_1,\dots,\tilde{Y}_m)$ be a subvector of $Y$ with $\tilde{Y}_j=Y_{m-i+j}$ for $j=1,\dots,m$, which again has the distribution $\tilde{Y} \sim \normal(\mathbf{0}_m,\mathbf{I}_m)$. The relationship between $Y$ and $\tilde{Y}$
is illustrated in Figure~\ref{fig:breakpointlemma}. By the property~\eqref{eqn:subsequence_breakpoint} of isotonic regression,  we know that
\[\iso(Y)_m < \iso(Y)_{m+1}  \ \Rightarrow \ \iso(\tilde{Y})_m < \iso(\tilde{Y})_{m+1},\]
which concludes the proof.
\end{proof}
\noindent Finally, the proof of Lemma~\ref{lem:breakpointlemma} for the general case is established by comparing the distribution of $Y$
to the distribution of a standard Gaussian random vector, using our assumptions on the low variability in the
$\mu_j$'s and $\sigma_j$'s near the index $i$.
The proof is given in Appendix~\ref{sec:proof_breakpointlemma}.

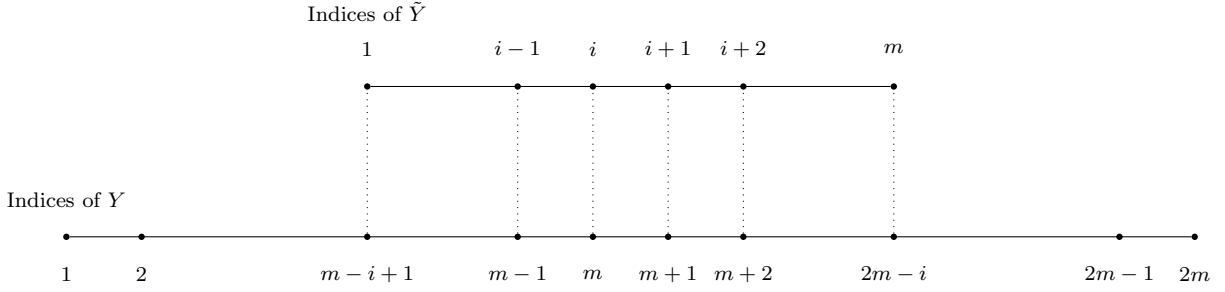
\begin{figure}[t]
\scriptsize
\begin{tikzpicture}
\begin{scope}[xshift=0cm,yshift=0cm]

\node[circle,fill=black,scale=0.3] (y1) at (1,5) {};
\node[circle,fill=black,scale=0.3] (y2) at (2,5) {};
\node[circle,fill=black,scale=0.3] (ymminusiplus1) at (5,5) {};
\node[circle,fill=black,scale=0.3] (ymminus1) at (7,5) {};
\node[circle,fill=black,scale=0.3] (ym) at (8,5) {};
\node[circle,fill=black,scale=0.3] (ymplus1) at (9,5) {};
\node[circle,fill=black,scale=0.3] (ymplus2) at (10,5) {};
\node[circle,fill=black,scale=0.3] (y2mminusi) at (12,5) {};
\node[circle,fill=black,scale=0.3] (y2mminus1) at (15,5) {};
\node[circle,fill=black,scale=0.3] (y2m) at (16,5) {};

\node[circle,fill=black,scale=0.3] (yt1) at (5,7) {};
\node[circle,fill=black,scale=0.3] (ytiminus1) at (7,7) {};
\node[circle,fill=black,scale=0.3] (yti) at (8,7) {};
\node[circle,fill=black,scale=0.3] (ytiplus1) at (9,7) {};
\node[circle,fill=black,scale=0.3] (ytiplus2) at (10,7) {};
\node[circle,fill=black,scale=0.3] (ytm) at (12,7) {};

\node at (1,4.5) {$1$};
\node at (2,4.5) {$2$};
\node at (5,4.5) {$m-i+1$};
\node at (7,4.5) {$m-1$};
\node at (8,4.5) {$m$};
\node at (9,4.5) {$m+1$};
\node at (10,4.5) {$m+2$};
\node at (12,4.5) {$2m-i$};
\node at (15,4.5) {$2m-1$};
\node at (16,4.5) {$2m$};

\node at (1,5.5) {Indices of $Y$};

\node at (5,7.5) {$1$};
\node at (7,7.5) {$i-1$};
\node at (8,7.5) {$i$};
\node at (9,7.5) {$i+1$};
\node at (10,7.5) {$i+2$};
\node at (12,7.5) {$m$};

\node at (5,8) {Indices of $\tilde{Y}$};

\draw[dotted] (ymminusiplus1.north) -- (yt1.south);
\draw[dotted] (ymminus1.north) -- (ytiminus1.south);
\draw[dotted] (ym.north) -- (yti.south);
\draw[dotted] (ymplus1.north) -- (ytiplus1.south);
\draw[dotted] (ymplus2.north) -- (ytiplus2.south);
\draw[dotted] (y2mminusi.north) -- (ytm.south);

\draw[-] (y1.east) -- (y2m.west);
\draw[-] (yt1.east) -- (ytm.west);
\end{scope}
\end{tikzpicture}
\caption{Illustration of the proof idea for Lemma~\ref{lem:breakpoint_standardnormal}.
A break between indices $i$ and $i+1$ in the isotonic regression of $\tilde{Y}$, corresponds to a break between indices $m$ and $m+1$ for $Y$.}
\label{fig:breakpointlemma}
\end{figure}

\subsection{Proof of upper bound (Theorem~\ref{thm:upperbd_smooth})}\label{sec:proof_upperbd}
The proof of our upper bound will follow these steps to bound the bias of $\iso(Y)_i$:
\begin{itemize}
\item {\bf Step 1:} We replace the random vector $Y$ with a Gaussian random vector $\tilde{Y}$ whose
entries have the same means and variances, and 
bound the change in the bias at index $i$.
\item {\bf Step 2:} From the Gaussian random vector $\tilde{Y}$, we extract a subvector of length $\asymp n^{2/3}(\log n)^{1/3}$ centered at index $i$,
and 
bound the change in the bias at index $i$.
\item {\bf Step 3:} We take an approximation to the new subvector, with linearly increasing means and with constant variance,
and bound the change in the bias at index $i$. We will also see that the new approximation has zero bias due to symmetry.
\end{itemize}
\subsubsection{Step 1: reduce to the Gaussian case}
We will first reduce the general problem to a Gaussian approximation, using the following lemma:
\begin{lemma}\label{lem:coupling}
Fix $n\geq 2$, and suppose $Y\in\R^n$ is a random vector satisfying assumptions~\eqref{eqn:mu_L1},~\eqref{eqn:noise_assump}, and~\eqref{eqn:sig_Lip} 
with parameters $L_1>0$, $\lambda>0$, $\tau>0$, $L_\sigma>0$, and $\sigma_{\min}>0$.
Then
there exists a coupling between $Y$ and
\[\tilde{Y}\sim\normal\big((\mu_1,\dots,\mu_n),\diag\{\sigma^2_1,\dots,\sigma^2_n\}\big)\]
satisfying
\[\EE{\big|\iso(Y)_i - \iso(\tilde{Y})_i\big|} \leq \frac{C_1(\log n)^{10/3}}{n^{2/3}}\textnormal{ for all $i$ with $\frac{C_2n^{2/3}}{(\log n)^{1/3}} \leq i \leq n - \frac{C_2 n^{2/3}}{(\log n)^{1/3}}$,}\]
where $C_1,C_2$ depend only on $L_1,\lambda,\tau,L_{\sigma},\sigma_{\min}$, and not on $n$.
\end{lemma}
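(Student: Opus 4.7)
My plan is to exploit the min-max representation~\eqref{eqn:minmax} of isotonic regression together with a Gaussian strong approximation for the partial sums of $Y$. Since $\iso(y)_i$ is a max over $j \leq i$ and min over $k \geq i$ of interval averages $\bar{y}_{j:k}$, any coupling that keeps the partial sums of $Y$ and $\tilde{Y}$ uniformly close pathwise will force $\iso(Y)_i$ and $\iso(\tilde{Y})_i$ to agree up to the same uniform error \emph{divided} by the length of the relevant intervals. The breakpoint lemma supplies exactly this length: under our Lipschitz mean and variance assumptions, a constant segment of $\iso(\tilde{Y})$ containing an interior index $i$ has length on the order of $n^{2/3}/\polylog(n)$, and the min-max is attained at (or near) the endpoints of this segment.

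First I would invoke a KMT-style strong approximation for independent, non-identically-distributed subexponential summands with unequal variances (for instance, a version of Sakhanenko's or Zaitsev's theorem tailored to our assumption~\eqref{eqn:noise_assump}) to construct a coupling of $Y$ with $\tilde{Y}\sim\normal\big((\mu_1,\dots,\mu_n),\diag\{\sigma_1^2,\dots,\sigma_n^2\}\big)$ for which
\[\Delta := \max_{0 \leq k \leq n}\left|\sum_{\ell=1}^k (Y_\ell - \tilde{Y}_\ell)\right| \leq C_3 (\log n)^2\]
with probability at least $1-n^{-A}$, for any prescribed $A$ and with $C_3$ depending only on $\lambda$ and $\tau$. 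For any interval $[j,k]$ this yields the elementary bound $|\bar{Y}_{j:k} - \bar{\tilde{Y}}_{j:k}| \leq 2\Delta/(k-j+1)$. Next I set $m \asymp n^{2/3}/(\log n)^{1/3}$, for which our conditions~\eqref{eqn:mu_L1} and~\eqref{eqn:sig_Lip} together with $\sigma_i^2\geq \sigma_{\min}^2$ verify the near-constancy hypotheses~\eqref{eqn:breakpointlemma_nearlyconstant} of Lemma~\ref{lem:breakpointlemma} on any window of radius $m$ around an interior index $i$. Applying that lemma to $\tilde{Y}$ at every $i'$ in such a window and summing gives, with high probability, a constant segment $[j^*,k^*]$ of $\iso(\tilde{Y})$ containing $i$ with $k^*-j^*\gtrsim m/(\log n)^a$ for some fixed power $a$.

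On the intersection of the two good events, the min-max formula then forces
\[\bigl|\iso(Y)_i - \iso(\tilde{Y})_i\bigr| \;\lesssim\; \frac{(\log n)^2}{m/(\log n)^a} \;\lesssim\; \frac{(\log n)^{10/3}}{n^{2/3}},\]
after careful bookkeeping of log factors. On the complementary bad event (of probability at most $n^{-A}$) the crude bound $|\iso(Y)_i - \iso(\tilde{Y})_i| \leq \max_\ell |Y_\ell - \mu_\ell| + \max_\ell |\tilde{Y}_\ell - \mu_\ell|$ together with our subexponential tails leaves a contribution of order $n^{-A}\polylog(n)$, negligible for $A$ large.  The main obstacle I anticipate is the transfer step just sketched: the breakpoint lemma applies only to Gaussian inputs, so I get no direct control on the segments of $\iso(Y)$. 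The cleanest workaround is to argue purely via the min-max formula---any pair $(j,k)$ nearly attaining the max-min for $\iso(Y)_i$ can be replaced, at a cost of $\Delta/(k-j+1)$, by $(j^*,k^*)$ (or by any pair inside the long constant segment of $\iso(\tilde{Y})$)---so that only intervals of length $\gtrsim m/\polylog(n)$ effectively contribute, and the difference at index $i$ inherits the desired $\polylog(n)/n^{2/3}$ scaling.
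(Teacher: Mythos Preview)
Your overall architecture---Sakhanenko-type strong approximation for the partial sums, the min-max representation~\eqref{eqn:minmax}, and the breakpoint lemma to control the length of the constant segment of $\iso(\tilde{Y})$ through $i$---is exactly the paper's. The deterministic transfer step you flag as the ``main obstacle'' is also handled as you suggest: plugging $k_i(\tilde{Y})$ (respectively $j_i(\tilde{Y})$) into the min (respectively max) of the min-max for $\iso(Y)_i$ yields
\[
\bigl|\iso(Y)_i - \iso(\tilde{Y})_i\bigr| \;\leq\; \left(\frac{1}{k_i(\tilde{Y})-i+1}+\frac{1}{i-j_i(\tilde{Y})+1}\right)\cdot \max_{j\leq i\leq k}\Bigl|\textstyle\sum_{\ell=j}^k (Y_\ell-\tilde{Y}_\ell)\Bigr|,
\]
so only the Gaussian segments matter, and no control on $\iso(Y)$'s own breakpoints is needed.

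Where your sketch breaks is the probabilistic bookkeeping. You assert that the ``complementary bad event'' has probability at most $n^{-A}$, but this is true only for the coupling event $\{\Delta \leq C_3(\log n)^2\}$. The \emph{second} good event---that the constant segment of $\iso(\tilde{Y})$ through $i$ has length $\gtrsim m/(\log n)^a$---cannot be made to hold with probability $1-n^{-A}$ from Lemma~\ref{lem:breakpointlemma}: a union bound over a window of width $L_0=m/(\log n)^a$ with per-index breakpoint probability $\asymp (\log n)^{4/3}/n^{2/3}$ yields failure probability $\asymp (\log n)^{1-a}$, which is only polylogarithmically small. On that short-segment event your crude bound $\max_\ell|Y_\ell-\mu_\ell|+\max_\ell|\tilde{Y}_\ell-\mu_\ell|=O(\log n)$ contributes $\asymp (\log n)^{2-a}$ to the expectation, which does not vanish (let alone reach $n^{-2/3}$) for any fixed $a$.

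The repair, and what the paper does, is to abandon the single-threshold good/bad split and instead bound $\EE{1/(k_i(\tilde{Y})-i+1)}$ directly: write it as $\sum_{\ell\geq 1}\ell^{-1}\PP{k_i(\tilde{Y})-i+1=\ell}$, use $\PP{k_i(\tilde{Y})-i+1=\ell}\leq \PP{\iso(\tilde{Y})_{i+\ell-1}\neq\iso(\tilde{Y})_{i+\ell}}\lesssim(\log n)^{4/3}/n^{2/3}$ for $\ell\leq \ell_{\max}\asymp n^{2/3}/(\log n)^{1/3}$, and bound the tail by $1/\ell_{\max}$. The harmonic weight $1/\ell$ is what makes the sum $\lesssim(\log n)^{7/3}/n^{2/3}$; multiplying by the coupling error $\asymp\log n$ gives the stated $(\log n)^{10/3}/n^{2/3}$. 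Equivalently, a dyadic decomposition over segment-length scales would work---but a single cutoff does not.
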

\noindent This result is proved in Appendix~\ref{sec:coupling_proof}, using
the Gaussian coupling theorem of \citet[Theorem 1]{sakhanenko1985convergence} along with our breakpoint lemma (Lemma~\ref{lem:breakpointlemma}).

Now define $\tilde{Y} \sim\normal\big((\mu_1,\dots,\mu_n),\diag\{\sigma^2_1,\dots,\sigma^2_n\}\big)$.
Applying Lemma~\ref{lem:coupling} together with the triangle inequality, we see that
\begin{equation}\label{eqn:step1_bias}\left|\EE{\iso(Y)_i} - \mu_i\right|  \leq \left|\EE{\iso(\tilde{Y})_i} - \mu_i\right| +  \frac{C_1(\log n)^{10/3}}{n^{2/3}}\end{equation}
for all $i$ with 
\begin{equation}\label{eqn:step1_i}\frac{C_2 n^{2/3}}{(\log n)^{1/3}} \leq i \leq n - \frac{C_2 n^{2/3}}{(\log n)^{1/3}},
\end{equation}
 for some $C_1,C_2$ depending on $L_1,\lambda,\tau,L_\sigma,\sigma_{\min}$.

From this point on, then, it suffices to bound the first term in this upper bound---that is, we 
need to prove the main result, Theorem~\ref{thm:upperbd_smooth}, in the special case that
the noise terms $Z_i$ are Gaussian. From this point on, we will work with $\tilde{Y} = \mu + \tilde{Z}$ where $\tilde{Z}_i\sim\normal(0,\sigma^2_i)$.

\subsubsection{Step 2: extract a subvector}
Next, let
\[m = \left\lceil\left(\frac{18\lambda n\sqrt{L_1\log n}}{L_0^{3/2}}\right)^{2/3}\right\rceil - 1\]
and assume that 
$m\leq i\leq n+1-m$.
We will
see that $\EE{\iso(\tilde{Y})_i}$ is not changed substantially if we
instead calculate the isotonic regression of
\[\tilde{Y}^{(i)} = (\tilde{Y}_{i-m},\dots,\tilde{Y}_i,\dots,\tilde{Y}_{i+m})\in\R^{2m+1}.\]
Note that index $m+1$ in the subvector $\tilde{Y}^{(i)}$ corresponds to index $i$ in the full vector $\tilde{Y}$.

Now we bound the bias of $\iso(\tilde{Y})_i$ in terms of the subvector. We can write
\begin{align*}
&\left|\EE{\iso(\tilde{Y})_i} - \EE{\iso(\tilde{Y}^{(i)})_{m+1}}\right| \\
& \leq \EE{\left|\iso(\tilde{Y})_i - \iso(\tilde{Y}^{(i)})_{m+1}\right| } \\
&= \EE{\left|\iso(\tilde{Y})_i - \iso(\tilde{Y}^{(i)})_{m+1}\right| \cdot \One{\iso(\tilde{Y})_i\neq \iso(\tilde{Y}^{(i)})_{m+1}}}\\ 
&\leq \sqrt{\EE{\left(\iso(\tilde{Y})_i - \iso(\tilde{Y}^{(i)})_{m+1}\right)^2 }}\cdot\sqrt{\PP{\iso(\tilde{Y})_i\neq \iso(\tilde{Y}^{(i)})_{m+1}}} .\end{align*}
Deterministically, we have\begin{multline*}
\left|\iso(\tilde{Y})_i - \iso(\tilde{Y}^{(i)})_{m+1}\right| \leq \max_{1\leq j\leq n} \tilde{Y}^{(i)}_j - \min_{1\leq j\leq n} \tilde{Y}^{(i)}_j \\\leq \left(\max_{1\leq j\leq n} \mu^{(i)}_j - \min_{1\leq j\leq n} 
 \mu^{(i)}_j\right) + \left(\max_{1\leq j\leq n} \tilde{Z}^{(i)}_j - \min_{1\leq j\leq n} \tilde{Z}^{(i)}_j\right) \leq L_1 + 2 \max_{1\leq j\leq n}|\tilde{Z}^{(i)}_j|,\end{multline*}
 and therefore,
\[\EE{\left(\iso(\tilde{Y})_i -  \iso(\tilde{Y}^{(i)})_{m+1}\right)^2} \leq 2L_1^2 + 8\EE{ \max_{1\leq j\leq n}|\tilde{Z}^{(i)}_j|^2} \leq 2L_1^2 + 32\lambda^2\log n,\]
since the $\tilde{Z}^{(i)}_j$'s are Gaussian with variances bounded by $\lambda^2$, and the expected maximum of $n$ $\chi^2_1$'s is bounded by $4\log n$ for any $n\geq4$. 

Next we bound $\PP{\iso(\tilde{Y})_i\neq  \iso(\tilde{Y}^{(i)})_{m+1}}$.
By the property~\eqref{eqn:subsequence_equal} of isotonic regression, if $\iso(\tilde{Y})_i\neq  \iso(\tilde{Y}^{(i)})_{m+1}$ then it must be
the case that either $\iso(\tilde{Y})_i = \iso(\tilde{Y})_{i-m-1}$ or $\iso(\tilde{Y})_i = \iso(\tilde{Y})_{i+m+1}$.
Now, since $\mu$ is $L_0$-strictly increasing~\eqref{eqn:mu_L0}), we have
\[\big|\mu_i - \mu_{i-m-1}\big| \geq \frac{L_0(m+1)}{n}\]
and so, by the triangle inequality,
if $\iso(\tilde{Y})_i = \iso(\tilde{Y})_{i-m-1}$ then 
\[\max\left\{\big|\iso(\tilde{Y})_i - \mu_i\big|, \big|\iso(\tilde{Y})_{i-m-1} - \mu_{i-m-1}\big|\right\}
\geq \frac{L_0(m+1)}{2n} > \sqrt[3]{\frac{40L_1\lambda^2\log n}{n}},\]
where the last step holds by our definition of $m$.
Applying the same reasoning to the second case (i.e., $\iso(\tilde{Y})_i = \iso(\tilde{Y})_{i+m+1}$) and combining the two cases, 
this yields
\begin{equation}\label{eqn:step2_apply_yang}\PP{\iso(\tilde{Y})_i\neq \iso(\tilde{Y}^{(i)})_{m+1}} \leq \PP{\max_{j\in\{i-m-1,i,i+m+1\}}\big|\iso(\tilde{Y})_j - \mu_j\big|   > \sqrt[3]{\frac{40L_1\lambda^2\log n}{n}}}.\end{equation}
Now, we recall \citet{yang2018}'s result~\eqref{eqn:yang}---since $\tilde{Y}$ is $L_1$-Lipschitz with $\lambda$-subgaussian noise, choosing probability $\delta = 1/n^2$ 
we have
\begin{equation}\label{eqn:yang_for_upperbdthm}\PP{\max_{j_0\leq j\leq n+1-j_0}\big|\iso(\tilde{Y})_j - \mu^{(i)}_j| \leq \sqrt[3]{\frac{40L_1\lambda^2\log n}{n}}}\geq 1-1/n^2,\end{equation}
where
$j_0 = \left(\frac{\lambda n\sqrt{5\log n}}{L_1}\right)^{2/3}$.
In order to apply this to bound~\eqref{eqn:step2_apply_yang},
we need to ensure that $j_0 \leq i-m-1$ and $i+m+1\leq n+1-j_0$. Plugging in our definition of $m$, this is equivalent to
\begin{equation}\label{eqn:step2_i}
\min\{i,n+1-i\}\geq
\left\lceil\left(\frac{18\lambda n\sqrt{L_1\log n}}{L_0^{3/2}}\right)^{2/3}\right\rceil +  \left(\frac{\lambda n\sqrt{5\log n}}{L_1}\right)^{2/3}.\end{equation}
Combining~\eqref{eqn:yang_for_upperbdthm} with~\eqref{eqn:step2_apply_yang} we have
$\PP{\iso(\tilde{Y})_i\neq \iso(\tilde{Y}^{(i)})_{m+1}} \leq 1/n^2$.
Returning to our work above, therefore, we obtain
\begin{equation}\label{eqn:step2_bias}\left|\EE{\iso(\tilde{Y})_i} - \EE{\iso(\tilde{Y}^{(i)})_{m+1}}\right| \leq \frac{\sqrt{2L_1^2 + 32\lambda^2\log n }}{n}.\end{equation}

\subsubsection{Step 3: take a linear approximation}
Finally, we will
see that $\EE{\iso(\tilde{Y}^{(i)})_{m+1}}$ is not changed substantially if we replace it with a random vector
that has linear means and has constant variance.
Define
\[\check{\mu}_j = \frac{(i+m)-j}{2m}\cdot \mu_{i-m} + \frac{j-(i-m)}{2m}\cdot \mu_{i+m}, \quad i-m\leq j\leq i+m\]
and
\[\check{Z}_j = \frac{\sigma_i}{\sigma_j}\tilde{Z}_j, \quad i-m\leq j\leq i+m,\]
and finally let
\[\check{Y}^{(i)} = (\check{\mu}_{i-m} + \check{Z}_{i-m},\dots,\check{\mu}_i + \check{Z}_i,\dots,\check{\mu}_{i+m}+\check{Z}_{i+m}).\]
This construction is illustrated in Figure~\ref{fig:linearapprox}.

\begin{figure}[t]\centering
\includegraphics[width=0.6\textwidth]{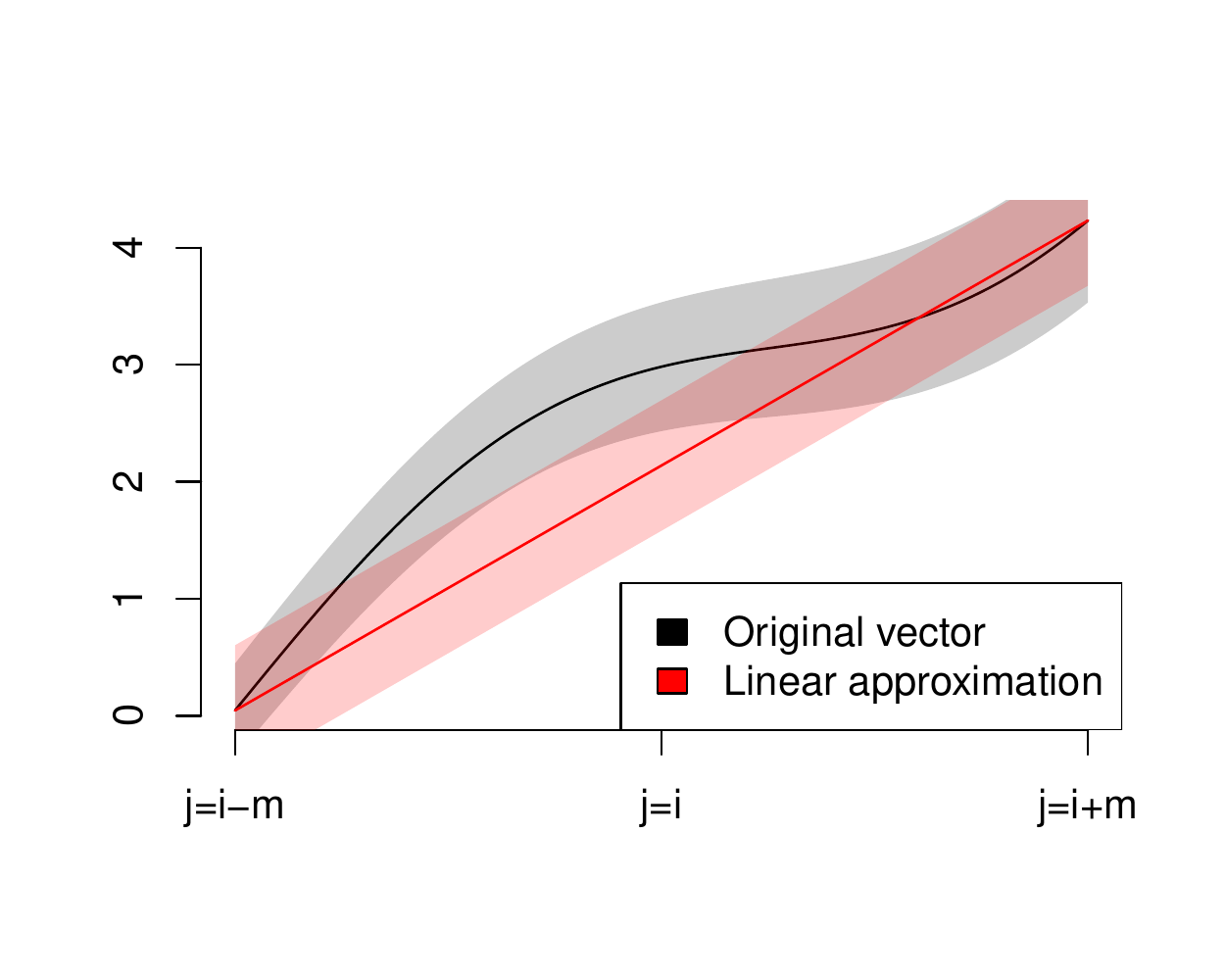}
\caption{Illustration of the linear approximation (Step 3 of the proof of Theorem~\ref{thm:upperbd_smooth},
with the distribution of $\tilde{Y}^{(i)}$ shown in black, and $\check{Y}^{(i)}$ in red. The means $\mu_j$ and $\check{\mu}_j$ are drawn
as solid black and red lines, respectively, while the standard deviations of $\tilde{Z}_j$ and $\check{Z}_j$ 
are represented by the black and red shaded regions.}
\label{fig:linearapprox}
\end{figure}

We will now show that $\EE{\left|\iso(\tilde{Y}^{(i)})_{m+1} - \iso(\check{Y}^{(i)})_i\right|}$ is small, so that we can use $\check{Y}^{(i)}$ to bound the bias of $\iso(\tilde{Y})_i$.
Using the ``min-max'' formulation of isotonic regression~\eqref{eqn:minmax},
\begin{align*}
\iso(\tilde{Y}^{(i)})_{m+1}
&= \max_{1\leq j\leq m+1}\min_{m+1\leq k\leq 2m+1} \overline{\tilde{Y}^{(i)}}_{j:k}\\
&\leq \max_{1\leq j\leq m+1}\min_{m+1\leq k\leq 2m+1} \overline{\check{Y}^{(i)}}_{j:k} + \max_{1\leq j\leq m+1\leq k\leq 2m+1}\left( \overline{\tilde{Y}^{(i)}}_{j:k} - \overline{\check{Y}^{(i)}}_{j:k}\right)\\
&=\iso(\check{Y}^{(i)})_{m+1}+ \max_{1\leq j\leq m+1\leq k\leq 2m+1}\left( \overline{\tilde{Y}^{(i)}}_{j:k}- \overline{\check{Y}^{(i)}}_{j:k}\right).
\end{align*}
An analogous lower bound holds similarly, and so
\begin{align*}
&\left|\EE{\iso(\tilde{Y}^{(i)})_{m+1}} - \EE{\iso(\check{Y}^{(i)})_{m+1}}\right|\\
&\leq \EE{ \max_{1\leq j\leq m+1\leq k\leq 2m+1}\left| \overline{\tilde{Y}^{(i)}}_{j:k} -\overline{\check{Y}^{(i)}}_{j:k}\right|}\\
&= \EE{ \max_{i-m\leq j\leq i\leq k\leq i+m}\left| \overline{(\mu + \tilde{Z})}_{j:k} -\overline{(\check{\mu}+\check{Z})}_{j:k}\right|}\textnormal{\quad by definition of $\tilde{Y}^{(i)}$ and $\check{Y}^{(i)}$}\\
&\leq  \max_{i-m\leq j\leq i\leq k\leq i+m}\left| \overline{\mu}_{j:k} -\overline{\check{\mu}}_{j:k}\right| +  \EE{ \max_{i-m\leq j\leq i\leq k\leq i+m}\left| \overline{\tilde{Z}}_{j:k} -\overline{\check{Z}}_{j:k}\right|}\\
&\leq  \max_{i-m\leq j\leq i+m}\left|\mu_j - \check{\mu}_j\right|+  \EE{ \max_{i-m\leq j\leq i\leq k\leq i+m}\left| \overline{\tilde{Z}}_{j:k} -\overline{\check{Z}}_{j:k}\right|}.
\end{align*}
Next,
since the random vector $\check{Y}^{(i)}$ is Gaussian with a linear mean and with constant variance,
by symmetry we can see that it has zero bias at its midpoint, i.e.,
\[\EE{\iso(\check{Y}^{(i)})_{m+1}} = \check{\mu}_i.\]
Therefore, by the triangle inequality,
\[\left|\EE{\iso(\tilde{Y}^{(i)})_{m+1}} - \mu_i\right|\leq 2 \max_{i-m\leq j\leq i+m}\left|\mu_j - \check{\mu}_j\right|+  \EE{ \max_{i-m\leq j\leq i\leq k\leq i+m}\left| \overline{\tilde{Z}}_{j:k} -\overline{\check{Z}}_{j:k}\right|}.\]
By definition of $\check{\mu}$ and  the smoothness assumption~\eqref{eqn:mu_smooth}, we have
\begin{multline*}
 \max_{i-m\leq j\leq i+m} \big|\mu_j - \check{\mu}_j\big|\\
 = \max_{i-m\leq j\leq i+m} \left|\mu_j - \left(\frac{(i+m)-j}{2m}\cdot \mu_{i-m} + \frac{j-(i-m)}{2m}\cdot \mu_{i+m}\right)\right|
\leq \frac{M}{4}\left(\frac{2m}{n}\right)^\beta,
\end{multline*}
 and therefore
\[\left|\EE{\iso(\tilde{Y}^{(i)})_{m+1}} - \mu_i\right|\leq \frac{M}{2}\left(\frac{2m}{n}\right)^\beta +  \EE{ \max_{i-m\leq j\leq i\leq k\leq i+m}\left| \overline{\tilde{Z}}_{j:k} -\overline{\check{Z}}_{j:k}\right|}.\]
Finally, to bound the last term, we have
\[\max_{i-m\leq j\leq i\leq k\leq i+m}\left|\overline{\tilde{Z}}_{j:k}-\overline{\check{Z}}_{j:k}\right|
= \max_{i-m\leq j\leq i\leq k\leq i+m}\left|\frac{1}{k-j+1}\sum_{\ell = j}^k \tilde{Z}_\ell \cdot \left(1 - \frac{\sigma_i}{\sigma_\ell}\right)\right|.\]
Since the $\tilde{Z}_j$'s are independent zero-mean Gaussians, for each pair of indices $j,k$, we see that $\frac{1}{k-j+1}\sum_{\ell = j}^k \tilde{Z}_\ell \cdot \left(1 - \frac{\sigma_i}{\sigma_\ell}\right)$
is a zero-mean Gaussian with standard deviation
\[\frac{1}{k-j+1}\sqrt{\sum_{\ell=j}^k \sigma_{\ell}^2\left(1 - \frac{\sigma_i}{\sigma_\ell}\right)^2} \leq \frac{1}{k-j+1}\sqrt{\sum_{\ell=j}^k \left(\frac{L_\sigma |\ell - i|}{n}\right)^2}\leq   \frac{L_\sigma\sqrt{m}}{n},\]
and so
\[\EE{\max_{i-m\leq j\leq i\leq k\leq i+m}\left|\overline{\tilde{Z}}_{j:k}-\overline{\check{Z}}_{j:k}\right|} \leq \frac{2 L_\sigma\sqrt{m \log n}}{n},\]
since there are at most $n^2/2$ many choices of $j,k$.
Combining everything, then,
\[\left|\EE{\iso(\tilde{Y}^{(i)})_{m+1}} - \mu_i\right| \leq \frac{M}{2}\left(\frac{2m}{n}\right)^\beta + \frac{2L_\sigma\sqrt{m\log n}}{n}.\]
Plugging in our choice of $m$, this simplifies to
\begin{equation}\label{eqn:step3_bias}\left|\EE{\iso(\tilde{Y}^{(i)})_{m+1}} - \mu_i\right|
\leq C_3  \left[\left(\frac{\log n}{n}\right)^{\beta/3} +  \left(\frac{\log n}{n}\right)^{2/3}\right],\end{equation}
for appropriately chosen $C_3$.

\subsubsection{Combining everything}
Combining our three steps, we see that the bounds~\eqref{eqn:step1_bias},~\eqref{eqn:step2_bias},~\eqref{eqn:step3_bias} combine to prove that
\[ \big|\EE{\iso(Y)_i} - \mu_i\big|\leq C\max\left\{\left(\frac{\log n}{n}\right)^{\beta/3} , \left(\frac{\log n}{n}\right)^{2/3},\frac{(\log n)^{10/3}}{n^{2/3}},\frac{\sqrt{\log n}}{n} \right\},\]
for $C$ chosen appropriately as a function of all the assumption parameters.
Since $\beta\leq 2$, the dominant term is $\left(\frac{\log n}{n}\right)^{\beta/3}$ for $\beta <2$ or $\frac{(\log n)^{10/3}}{n^{2/3}}$ for $\beta =2$.
 Examining the assumptions~\eqref{eqn:step1_i} and \eqref{eqn:step2_i} on the index $i$,
we see that this holds for all $i$ satisfying
\[C' (\log n)^{1/3}n^{2/3} \leq i \leq n+1 - C' (\log n)^{1/3}n^{2/3},\]
with $C'$ chosen appropriately as a function of all the assumption parameters. This completes the proof of the theorem.

We remark that,  if the noise is Gaussian, then Step 1 is not needed, and so the term $\frac{(\log n)^{10/3}}{n^{2/3}}$
does not appear in the upper bound. This means that the upper bound scales as $\left(\frac{\log n}{n}\right)^{\beta/3}$
both for the case $1\leq \beta <2$ and the case $\beta =2$, under Gaussian noise.

\subsection{Proof of lower bound for smoothness (Theorem~\ref{thm:lowerbd_smoothness})}\label{sec:proof_lowerbd_smooth}
Fix any $L_1 > L_0 \geq 0$, $M>0$, $\beta\in[1,2]$.
Consider a linear mean vector $\mu^{\textnormal{lin}}$, with entries
\[\mu^{\textnormal{lin}}_i = a_n \cdot \frac{i}{n},\]
and a mean vector $\mu$ that adds an oscillation,
\[\mu = \mu^{\textnormal{lin}} + b_n \Delta\textnormal{ where }\Delta_i =  \sin\left(c_n \cdot \frac{i}{n}\right).\]
This construction is illustrated in Figure~\ref{fig:lowerbd}.

We will specify the parameters $a_n,b_n,c_n\geq0$ later on, but for the moment
we assume that 
\begin{equation}\label{eqn:an_bn_cn} b_n c_n \leq a_n \textnormal{\quad and\quad }C_1 \leq c_n \leq C_2 n\textnormal{\quad and\quad }\frac{C_3}{\sqrt{n \log n}}\leq a_n \leq \frac{C_4(c_n)^{3/2}}{(\log n)^2\sqrt{n}},\end{equation}
where $C_1,C_2,C_3,C_4$ will not depend on $n$ and will be specified later on.
The first condition, $ a_n\geq b_n c_n$, ensures that $\mu$ is monotone nondecreasing. The second condition,  
essentially requiring that $1\ll c_n \ll n$, ensures that the oscillations of $\Delta$ are visible on the discrete points $i=1,\dots,n$---that is,
the sine wave completes many full cycles over the points $i=1,\dots,n$, and each cycle contains many indices $i$.
The third condition will be necessary for some calculations later on.

Now let $Z\sim\normal\big(\mathbf{0}_n,\sigma^2\mathbf{I}_n\big)$ for some $\sigma^2>0$, and 
define $Y = \mu+Z$ and $Y^{\textnormal{lin}} = \mu^{\textnormal{lin}} + Z$.
We will see that the nonlinear oscillations in $Y$ cannot be fully recovered by isotonic regression---while the 
gap between $Y$ and $Y^{\textnormal{lin}}$ is equal to $\pm b_n$ at the positive and negative peaks of the oscillation term $\Delta$,
the gap between $\iso(Y)$ and $\iso(Y^{\textnormal{lin}})$ at these points will typically be smaller. This is  because
$\iso(Y)_i$ and $\iso(Y^{\textnormal{lin}})_i$ are calculated via local averages (according to the min-max formula~\eqref{eqn:minmax}),
and the oscillations of $\Delta$ are therefore smoothed out, shrinking the difference between the two vectors towards zero.
The bias at or near the peaks of $\Delta$ will therefore be of order $b_n$, achieving a lower bound.

\begin{figure}[t]
\includegraphics[width=\textwidth]{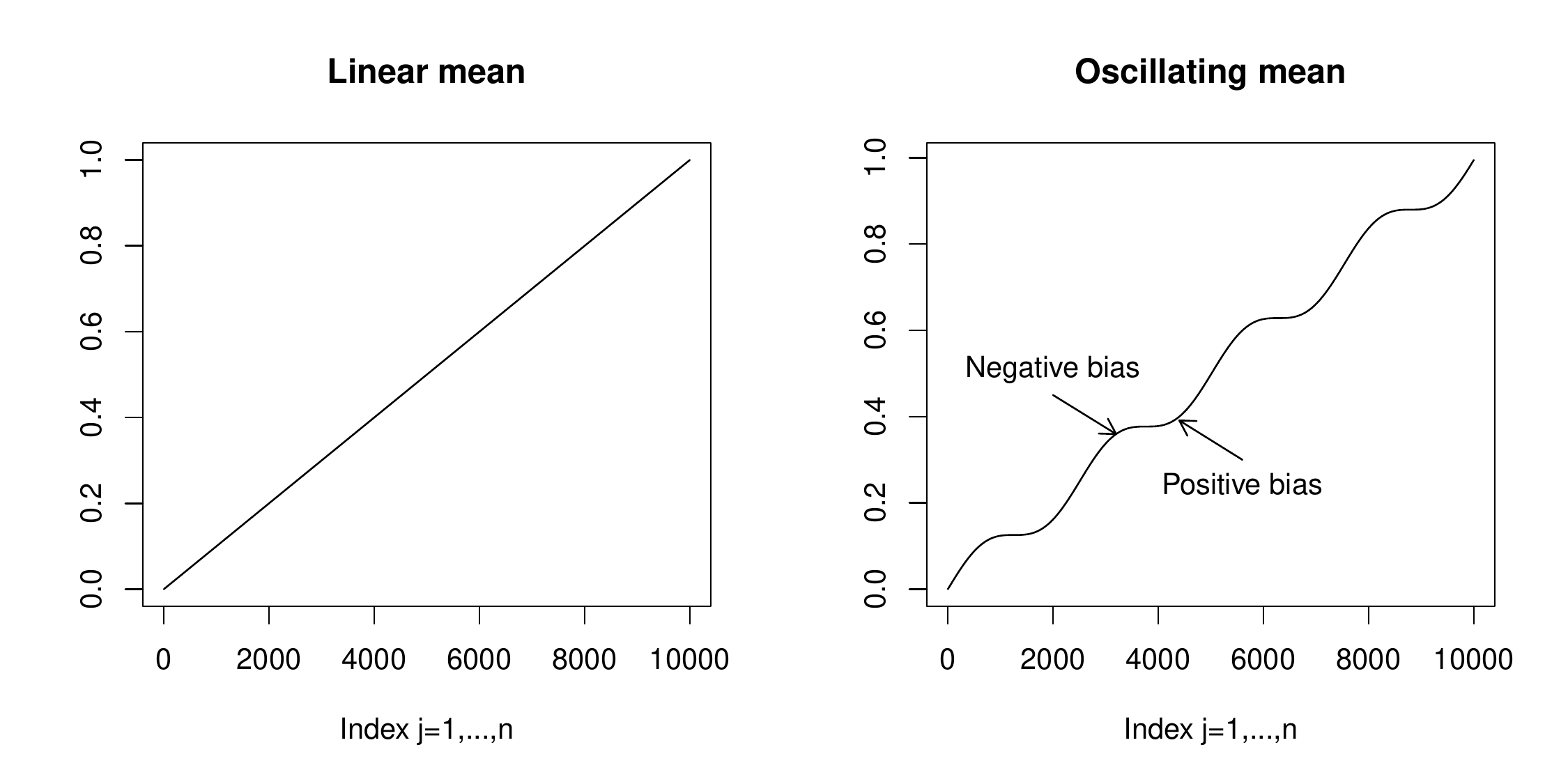}
\caption{Illustration of the construction for the lower bound result for smoothness (Theorem~\ref{thm:lowerbd_smoothness}).
The figure on the left illustrates the linear mean $\mu^{\textnormal{lin}}$, while the figure on the right illustrates the oscillating mean $\mu = \mu^{\textnormal{lin}}+b_n\Delta$.}
\label{fig:lowerbd}
\end{figure}

The remainder of the proof will follow these steps:
\begin{itemize}
\item {\bf Step 1:} We will show that
\begin{equation}\label{eqn:lowerbd_step_1}\mu_i \geq \mu^{\textnormal{lin}}_i + 0.8 b_n\textnormal{ for at least $C_5 n$ many indices $i$},\end{equation}
where $C_5$ will be specified later.
\item {\bf Step 2:} We will show that, for all indices $i$,
\[\iso(Y)_i \leq \iso(Y^{\textnormal{lin}})_i + b_n.\]
\item {\bf Step 3:} We will show that, for all indices $i$,
\[\textnormal{If $k_i(Y^{\textnormal{lin}})\geq i + \frac{C_6 n}{c_n}$ then }\iso(Y)_i \leq \iso(Y^{\textnormal{lin}})_i + 0.2 b_n ,\]
where $C_6$ will be specified later. 
\item {\bf Step 4:} We will show that
\[\PP{k_i(Y^{\textnormal{lin}})\geq i + \frac{C_6 n}{c_n}} \geq 0.5\textnormal{ for at least $(1-0.5C_5)n$ many indices $i$}.\]
\end{itemize}
Combining Steps 2, 3, and 4,  we see that
\begin{equation}\label{eqn:lowerbd_steps_234}\EE{\iso(Y)_i} \leq \EE{\iso(Y^{\textnormal{lin}})_i} + 0.6 b_n\textnormal{ for at least $(1-0.5C_5)n$ many indices $i$.}\end{equation}
Combining this with Step 1, therefore, there must be at least $0.5C_5n$ many indices $i$ for which 
the bounds~\eqref{eqn:lowerbd_step_1} and~\eqref{eqn:lowerbd_steps_234} both hold.
Applying the triangle inequality, we then have
\[\big|\EE{\iso(Y)_i} - \mu_i\big| + \big| \EE{\iso(Y^{\textnormal{lin}})_i} - \mu^{\textnormal{lin}}_i\big|\geq 0.2 b_n\]
for all such $i$.
This means that, for either $Y$ or $Y^{\textnormal{lin}}$,
it holds that the bias
satisfies the lower bound $0.1 b_n$ for at least $0.25C_5n$ many indices $i$.
By choosing $a_n,b_n,c_n$ appropriately, this will establish the lower bounds that we need.

We next give the details for the choice of $a_n,b_n,c_n$ to complete the proof of Theorem~\ref{thm:lowerbd_smoothness}, and then return to proving
the bounds in Steps 1, 2, 3, 4 above.
We will choose
\[a_n = \frac{L_1+L_0}{2} , \quad b_n = \min\left\{\frac{M}{2}, \frac{L_1 - L_0}{2} \right\} \cdot \big(n (\log n)^5\big)^{-\beta/3}, \quad c_n =n^{1/3}(\log n)^{5/3} .\]
First we check that the choices of $a_n,b_n,c_n$ satisfy the requirements~\eqref{eqn:an_bn_cn} for the steps of the proof,
which is trivial to verify.

Next, it's clear that $\mu,\mu^{\textnormal{lin}}$ are both $L_1$-Lipschitz~\eqref{eqn:mu_L1}
and $L_0$-strictly increasing~\eqref{eqn:mu_L0}. Finally we verify the H{\"o}lder smoothness condition~\eqref{eqn:mu_smooth}. For $\mu^{\textnormal{lin}}$ this is trivial
since it is a linear mean. For $\mu$,
we can write
\[\mu_i = f(i/n)\textnormal{ where } f(t) = a_n\cdot t + b_n \cdot \sin(c_n t),\]
for which we have
\[|f'(t_0)-f'(t_1)| = b_n c_n\cdot |\cos(c_n t_0) - \cos(c_n t_1)| \leq b_n c_n  \cdot \min\{c_n |t_0-t_1|,2\}.\]
Now we check that this is bounded by $M|t_0-t_1|^{\beta-1}$. If the first term in the minimum is larger, i.e., $|t_0-t_1|\geq 2/c_n$, then
\[M|t_0-t_1|^{\beta-1} \geq M(2/c_n)^{\beta-1} \geq 2b_nc_n,\]
by our choice of $b_n,c_n$. If instead the second term in the minimum is larger, i.e., $|t_0 - t_1|\leq 2/c_n$, then
\[M|t_0-t_1|^{\beta-1} = \frac{M|t_0-t_1|}{|t_0-t_1|^{2-\beta}} \geq \frac{M|t_0-t_1|}{(2/c_n)^{2-\beta}} \geq b_n(c_n)^2|t_0-t_1|,\]
by our choice of $b_n,c_n$. 
Therefore the function $f$ is $(\beta,M)$-H{\"o}lder smooth, and so this property is inherited by the vector $\mu$.
This verifies that $\mu,\mu^{\textnormal{lin}}$ each satisfy the assumptions needed for the theorem.

Applying the calculations above, we see that the bias of either $Y$ or $Y^{\textnormal{lin}}$
satisfies the lower bound $0.1 b_n$ for at least $0.25C_5n$ many indices $i$.
Since $b_n$ scales as $\big(n (\log n)^5\big)^{-\beta/3}$, this proves the desired result.

\subsubsection{Proof of Step 1}
The sine wave $\Delta$, given by $\Delta_i = \sin (c_n \cdot i/n)$, has period $\asymp n/c_n$, and we recall
that we have assumed that $C_1\leq c_n \leq C_2 n$.
This means that, for any $c\in (0,1)$, we can trivially show that, if $n$ is sufficiently large,
\[
\Delta_i \geq 1-c\textnormal{ for at least $c'n$ many indices $i\in\{1,\dots,n\}$}
\]
for some $c' >0$ that only depends on $c,C_1,C_2$ and not on $n$ or $c_n$.
Choosing $c = 0.2$, we then set $C_5 = c'$ to establish Step 1.

\subsubsection{Proof of Step 2}
By definition, we have
\[\big|Y_i -  Y^{\textnormal{lin}}_i \big| = \big|\mu_i - \mu^{\textnormal{lin}}_i\big| = b_n|\Delta_i|\\ 
\leq b_n\]
for all $i$,
and therefore, it holds deterministically that
\[\big|\iso(Y)_i - \iso(Y^{\textnormal{lin}})_i\big|\leq b_n\]
for all $i$ (this holds since $\|\iso(y)-\iso(y')\|_{\infty}\leq \|y-y'\|_{\infty}$ for any $y,y'\in\R^n$, by \citet[Lemma 1]{yang2018}).

\subsubsection{Proof of Step 3}

Next, we fix any index $i$, and consider the min-max formulation of isotonic regression as in~\eqref{eqn:minmax} and~\eqref{eqn:minmax_at_jk}:
\begin{align*}
\iso(Y)_i
&=\max_{j\leq i}\min_{k\geq i}\overline{Y}_{j:k}\\
&=\min_{k\geq i}\overline{Y}_{j_i(Y):k}\\
&=\min_{k\geq i}\left(\overline{Y^{\textnormal{lin}}}_{j_i(Y):k} + b_n \overline{\Delta}_{j_i(Y):k}\right)\\
&\leq\overline{Y^{\textnormal{lin}}}_{j_i(Y):k_i(Y^{\textnormal{lin}})} + b_n \overline{\Delta}_{j_i(Y):k_i(Y^{\textnormal{lin}})}\\
&\leq\max_{j\leq i}\overline{Y^{\textnormal{lin}}}_{j:k_i(Y^{\textnormal{lin}})} + b_n \overline{\Delta}_{j_i(Y):k_i(Y^{\textnormal{lin}})}\\
&=\iso(Y^{\textnormal{lin}})_i + b_n \overline{\Delta}_{j_i(Y):k_i(Y^{\textnormal{lin}})}.
\end{align*}
Therefore, it suffices to show that, for an appropriately chosen $C_6$,
\[\textnormal{If $k_i(Y^{\textnormal{lin}})\geq i + C_6 n/c_n$ then }\overline{\Delta}_{j_i(Y):k_i(Y^{\textnormal{lin}})}\leq 0.2 ,\]
Since $j_i(Y)\leq i$ by definition, we can weaken this to the statement
\[\overline{\Delta}_{j:k}\leq 0.2\textnormal{ for all indices $j,k\in\{1,\dots,n\}$ with $k\geq j+ C_6 n/c_n$.}\]
To see why this is true, observe that  $\Delta$ is a sine wave with period $\asymp n/c_n$.
Since the mean of the sine wave over one full cycle is zero, and the mean over any partial cycle is bounded by $1$,
this means that the bound above will hold for sufficiently large $C_6$ depending
only on $C_1,C_2$ and not on $n$ or $c_n$.

\subsubsection{Proof of Step 4} For this step we will use the breakpoint lemma. 
 We have
\[\sum_{j=\ell-m+1}^{\ell+m} (\mu^{\textnormal{lin}}_j - \mu^{\textnormal{lin}}_\ell)^2 \leq 2m\left(\frac{a_n m}{n}\right)^2\]
for any $m\geq 1$ and any $\ell$ with $m\leq \ell\leq n-m$. Choose $m = \left\lceil \left(\frac{n}{a_n \sqrt{\log n}}\right)^{2/3}\right\rceil$, and note
that $m\geq \frac{\log n}{C_2(C_4)^{2/3}}$ by our assumptions~\eqref{eqn:an_bn_cn}, which satisfies $m\geq 2$ for sufficiently large $n$.
Applying Lemma~\ref{lem:breakpointlemma} combined with the property~\eqref{eqn:subsequence_breakpoint} of isotonic regression, we have
\[\PP{\iso(Y^{\textnormal{lin}})_\ell\neq \iso(Y^{\textnormal{lin}})_{\ell+1}}\leq \frac{C_8 \log n}{m},\]
where $C_8$ depends only on $\sigma^2$. This implies that
\begin{multline*}\PP{k_i(Y^{\textnormal{lin}})< i + \frac{  m}{3 C_8\log n}} \\ = \PP{\iso(Y^{\textnormal{lin}})_\ell\neq \iso(Y^{\textnormal{lin}})_{\ell+1} \textnormal{ for some }i\leq \ell \leq i+  \frac{  m}{3 C_8\log n}}   \\
\leq \left( \frac{  m}{3 C_8\log n} + 1\right) \cdot   \frac{C_8 \log n}{m}\leq \frac{1}{3} + C_2(C_4)^{2/3}C_8\leq 0.5,\end{multline*}
as long as $i$ satisfies
\[m\leq i\leq n-m -  \frac{m}{3C_8 \log n}\]
and the constant $C_4$ in our assumption~\eqref{eqn:an_bn_cn} is chosen to be sufficiently small.

This completes Step 4 as long as 
\[\frac{C_6 n}{c_n} \leq \frac{  m}{3 C_8\log n} = \frac{1}{C_8\log n}\left\lceil \left(\frac{n}{a_n \sqrt{\log n}}\right)^{2/3}\right\rceil\]
and
\[
2m +  \frac{  m}{3 C_8\log n}  = \left(2 + \frac{1}{3+C_8\log n}\right) \left\lceil \left(\frac{n}{a_n \sqrt{\log n}}\right)^{2/3}\right\rceil \leq 0.5C_5n.\]
These conditions both hold for $a_n,c_n$ selected as in the proofs of the two theorems above (recalling condition~\eqref{eqn:an_bn_cn},
with $C_3$ chosen to be sufficiently large and $C_4$ sufficiently small).

\subsection{Proof of lower bound for strict monotonicity (Theorem~\ref{thm:lowerbd_strictincr})}\label{sec:proof_lowerbd_monotonicity}
Our argument for the proof of Theorem~\ref{thm:lowerbd_strictincr} is similar to that for the smoothness result, Theorem~\ref{thm:lowerbd_smoothness}.
Define function $f_0,f_1:[0,1]\rightarrow [0,1]$ as follows. Let $c_n,\epsilon_n>0$ be parameters that we will specify later
on. First let $g:[0,1]\rightarrow [0,1]$ be defined as
\[g(t) = \begin{cases}
0.5(2t)^\alpha, &0\leq t\leq 0.5,\\
1 - 0.5(2-2t)^\alpha, & 0.5\leq t \leq 1,\end{cases}\]
and then define
\[f_0 (t) = \begin{cases}
0, & 0\leq t \leq 0.5 -\epsilon_n,\\
c_n g\left(\frac{t - (0.5 - \epsilon_n)}{\epsilon_n}\right), & 0.5 - \epsilon_n \leq t \leq 0.5,\\
c_n, & 0.5 \leq t \leq 1,\end{cases}\]
and similarly
\[f_1 (t) = \begin{cases}
0, & 0\leq t \leq 0.5,\\
c_n g\left(\frac{t - 0.5}{\epsilon_n}\right), & 0.5 \leq t \leq 0.5+ \epsilon_n,\\
c_n, & 0.5+ \epsilon_n \leq t \leq 1.\end{cases}\]
Define $\mu^{(\ell)}_i = f_\ell(i/n)$ for each $\ell=0,1$ and each $i=1,\dots,n$. This construction is illustrated in Figure~\ref{fig:lowerbd2}.

We will now choose
\[c_n = C_1(n (\log n)^2)^{-\alpha/(2\alpha +1)}\text{ and }\epsilon_n = C_2(n (\log n)^2)^{-1/(2\alpha +1)},\]
where $C_1,C_2$ are constants depending only on the parameters $L_1,M,\alpha$.
We can verify that $f_0,f_1$ are both monotone nondecreasing, 
$L_1$-Lipschitz, and $(\beta,M)$-smooth with $\beta = \min\{2,\alpha\}$, as long as $C_1,C_2$ are chosen appropriately;
these properties are therefore inherited by the mean vectors $\mu^{(0)}$ and $\mu^{(1)}$.
Furthermore, $f_0,f_1$ both satisfy \citet{wright1981}'s condition~\eqref{eqn:alpha_wright} at $t_0 = 0.5$, specifically,
\[|f_\ell(t) - f_\ell(0.5) | \leq \frac{c_n 2^{\alpha-1}}{\epsilon_n^{\alpha}} |t - 0.5|^{\alpha} \leq C_3 |t - 0.5|^{\alpha}\text{ for each $\ell=0,1$},\]
where $C_3$ depends only on $C_1,C_2,\alpha$.

Now let $Z\sim\normal\big(\mathbf{0}_n,\sigma^2\mathbf{I}_n\big)$ for some $\sigma^2>0$, and 
define $Y^{(\ell)} = \mu^{(\ell)}+Z$ for each $\ell=0,1$.

We will see that the difference between $Y^{(0)}$ and $Y^{(1)}$ around the index $i_0=n/2$ cannot be fully recovered by isotonic regression.
The intuition for this phenomenon is the same as in the proof of Theorem~\ref{thm:lowerbd_smoothness};
since $\iso(Y^{(0)})_{i_0}$ and $\iso(Y^{(1)})_{i_0}$ are calculated via local averages near $i_0$ (according to the min-max formula~\eqref{eqn:minmax}),
while the difference between the vectors $Y^{(0)}$ and $Y^{(1)}$ is constrained to a vanishing region (they
are equal everywhere except for indices $i\in i_0 \pm n \epsilon_n)$, 
isotonic regression will often shrink the difference between the two signals at the index $i_0$---this will happen whenever the local average is
taken over a sufficiently wide range, i.e., in the min-max formula~\eqref{eqn:minmax}, $\iso(Y^{(\ell)})_{i_0} = \overline{Y^{(\ell)}}_{j:k}$ for $k-j$ sufficiently large.
The bias at index $i_0$ will therefore be of order $c_n$, achieving a lower bound.

\begin{figure}[t]
\includegraphics[width=0.5\textwidth]{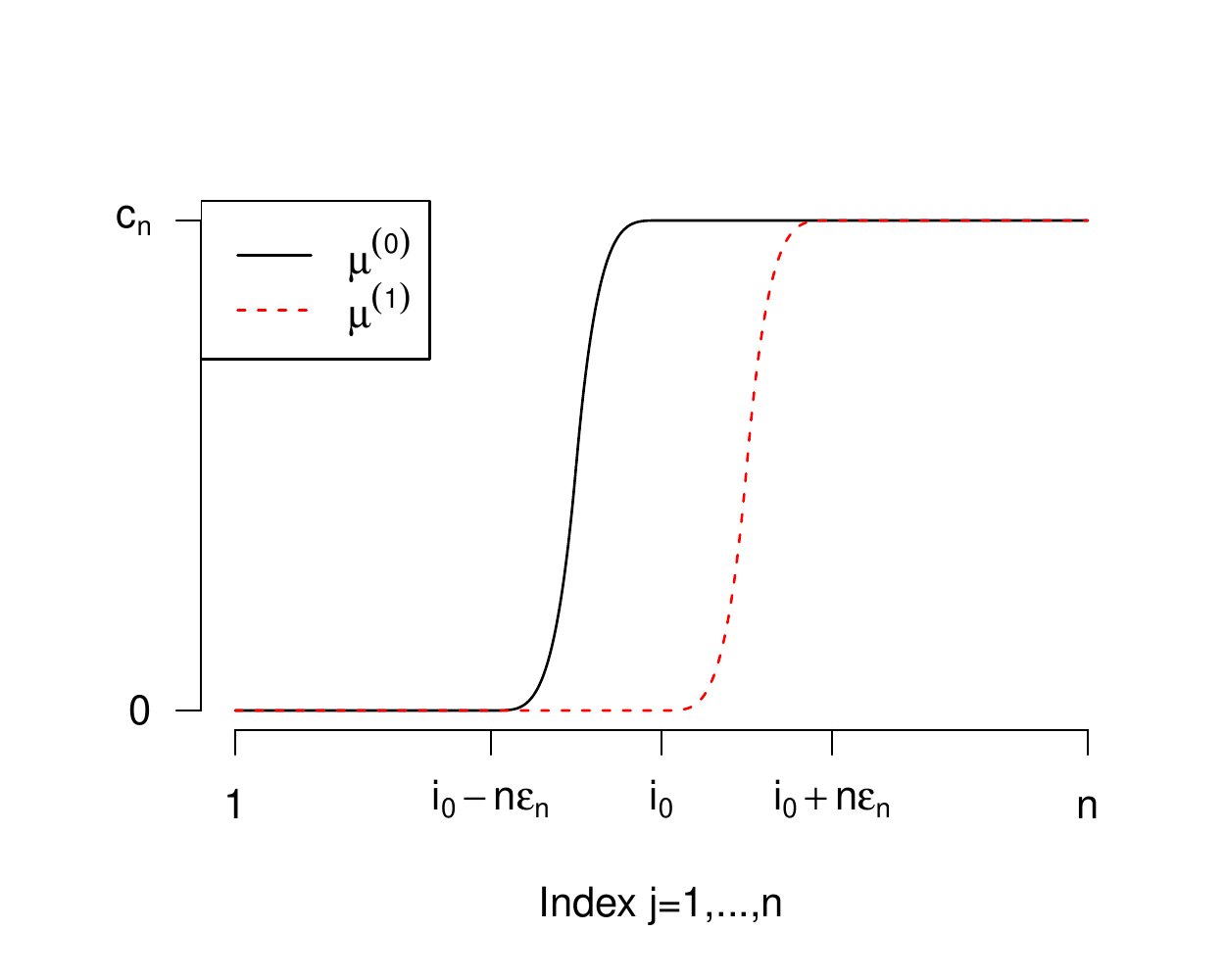}
\caption{Illustration of the construction for the lower bound result for strict monotonicity (Theorem~\ref{thm:lowerbd_strictincr}).
The figure illustrates the two different signal vectors $\mu^{(0)}$ and $\mu^{(1)}$ used in the construction for the proof.}
\label{fig:lowerbd2}
\end{figure}

The remainder of the proof will follow these steps:
\begin{itemize}
\item {\bf Step 1:} We will show that, deterministically,
\[\iso(Y^{(0)})_{i_0} \leq \iso(Y^{(1)})_{i_0} + c_n.\]
\item {\bf Step 2:} We will show that 
\[\textnormal{If $k_{i_0}(Y^{(1)})\geq i_0 + 10 n \epsilon_n$ then $\iso(Y^{(0)})_{i_0} \leq \iso(Y^{(1)})_{i_0} + 0.2 c_n$.}\]
\item {\bf Step 3:} We will show that
\[\PP{k_{i_0}(Y^{(1)})\geq  i_0 + 10 n \epsilon_n} \geq 0.5.\]
\end{itemize}
Combining these steps, we see that
\[\EE{\iso(Y^{(0)})_{i_0}} \leq \EE{\iso(Y^{(1)})_{i_0}} +0.6 c_n.\]
Applying the triangle inequality, we then have
\[\big|\EE{\iso(Y^{(0)})_{i_0}} - \mu^{(0)}_{i_0}\big| + \big| \EE{\iso(Y^{(1)})_{i_0}} - \mu^{(1)}_{i_0}\big|\geq 0.4 c_n\]
since $\mu^{(0)}_{i_0} = \mu^{(1)}_{i_0} + c_n$.
This means that, for either $Y^{(0)}$ or $Y^{(1)}$,
it holds that the bias at $i_0$
satisfies the lower bound $0.2 c_n$.
With $c_n$ as specified above, this completes the proof of Theorem~\ref{thm:lowerbd_strictincr}.

Now we turn to proving
the bounds in Steps 1, 2, 3 above.

\subsubsection{Proof of Step 1}
By definition, we have
\[\big|Y^{(0)}_i -  Y^{(1)}_i \big| = \big|\mu^{(0)}_i - \mu^{(1)}_i\big| 
\leq c_n\]
for all $i$,
and therefore, it holds deterministically that
\[\big|\iso(Y^{(0)})_i - \iso(Y^{(1)})_i\big|\leq c_n\]
for all $i$ (this holds since $\|\iso(y)-\iso(y')\|_{\infty}\leq \|y-y'\|_{\infty}$ for any $y,y'\in\R^n$, by \citet[Lemma 1]{yang2018}).

\subsubsection{Proof of Step 2}

Next, we define
\[\Delta = \frac{\mu^{(0)} - \mu^{(1)}}{c_n},\] and consider the min-max formulation of isotonic regression as in~\eqref{eqn:minmax} and~\eqref{eqn:minmax_at_jk}:
\begin{align*}
\iso(Y^{(0)})_{i_0}
&=\max_{j\leq i_0}\min_{k\geq i_0}\overline{Y^{(0)}}_{j:k}\\
&=\min_{k\geq i_0}\overline{Y}_{j_{i_0}(Y^{(0)}):k}\\
&=\min_{k\geq i_0}\left(\overline{Y^{(1)}}_{j_{i_0}(Y^{(0)}):k} + c_n \overline{\Delta}_{j_{i_0}(Y^{(0)}):k}\right)\\
&\leq\overline{Y^{(1)}}_{j_{i_0}(Y^{(0)}):k_{i_0}(Y^{(1)})} + c_n \overline{\Delta}_{j_{i_0}(Y^{(0)}):k_{i_0}(Y^{(1)})}\\
&\leq\max_{j\leq i}\overline{Y^{(1)}}_{j:k_{i_0}(Y^{(1)})} + c_n \overline{\Delta}_{j_{i_0}(Y^{(0)}):k_{i_0}(Y^{(1)})}\\
&=\iso(Y^{(1)})_{i_0} + c_n \overline{\Delta}_{j_{i_0}(Y^{(0)}):k_{i_0}(Y^{(1)})}.
\end{align*}
Therefore, it suffices to show that
\[\textnormal{If $k_{i_0}(Y^{(1)})\geq i_0 + 10 n\epsilon_n$ then }\overline{\Delta}_{j_{i_0}(Y^{(0)}):k_{i_0}(Y^{(1)})}\leq 0.2 ,\]
Since $j_{i_0}(Y^{(0)})\leq i_0$ by definition, we can weaken this to the statement
\[\overline{\Delta}_{j:k}\leq 0.2\textnormal{ for all indices $j,k\in\{1,\dots,n\}$ with $j \leq i_0$ and $k\geq i_0 + 10 n \epsilon_n$.}\]
This is true simply because, by definition of $\Delta$, we have $\Delta_i \leq 1$ for all $i$ and $\Delta_i = 0$ for all $|i- i_0| \geq n \epsilon_n$.

\subsubsection{Proof of Step 3} For this step we will use the breakpoint lemma. 
 We have
\[\sum_{j=\ell-m+1}^{\ell+m} (\mu^{(1)}_j - \mu^{(1)}_\ell)^2 \leq 2mc_n^2\]
for any $m\geq 1$ and any $\ell$ with $m\leq \ell\leq n-m$. Choose $m = \left\lceil \frac{C_4}{c_n^2\log n}\right\rceil$ which satisfies $m\geq 2$
for sufficiently large $n$.
Applying Lemma~\ref{lem:breakpointlemma}, we have
\[\PP{\iso(Y^{(1)})_\ell\neq \iso(Y^{(1)})_{\ell+1}}\leq \frac{C_5 \log n}{m},\]
where $C_5$ depends only on $\sigma^2,C_4$. This implies that
\begin{multline*}\PP{k_{i_0}(Y^{(1)})< i_0 + 10n\epsilon_n} \\ = \PP{\iso(Y^{(1)})_\ell\neq \iso(Y^{(1)})_{\ell+1} \textnormal{ for some }i_0\leq \ell \leq  i_0 + 10n\epsilon_n}   
\leq \left( 10n\epsilon_n + 1\right) \cdot   \frac{C_5 \log n}{m},\end{multline*}
as long as $i_0$ satisfies
\[m\leq i_0\leq n-m - 10n\epsilon_n.\]
By definition of $c_n,\epsilon_n,m$, as long as $C_1,C_2,C_4$ are chosen appropriately, we therefore have
\[\PP{k_{i_0}(Y^{(1)})< i_0 + 10n\epsilon_n} \leq 0.5,\]
which completes the proof of Step 3.

\section{Discussion}
In this work, we develop a sharp bound on the bias of isotonic regression in one dimension for strictly increasing signals,
establishing (up to log factors) that the bias is no larger than $n^{-2/3}$ for smooth signals, but may be as large as $n^{-1/3}$
in the non-smooth case. Many important open questions remain, for instance, whether these bounds on the bias
may be extended to the multidimensional setting, or to settings such as the Grenander estimator for a monotone density function.

\subsection*{Acknowledgements}
R.F.B.~was partially supported by the National Science Foundation via grant DMS-1654076 and by an Alfred P.~Sloan fellowship. G.R.~was partially supported by the National Science Foundation via grant DMS-1811767 and by the National Institute of Health via grant R01 GM131381-01. H. S.~was partially supported by the National Institute of Health via grant R01 GM131381-01. The authors thank Sabyasachi Chatterjee for helpful discussions.

\bibliographystyle{abbrvnat}
\bibliography{bib}

\appendix
\section{Additional proofs}\label{sec:app_proofs}

\subsection{Proof of breakpoint lemma (Lemma~\ref{lem:breakpointlemma})}\label{sec:proof_breakpointlemma}
In Section~\ref{sec:breakpointlemma}, we proved Lemma~\ref{lem:breakpoint_standardnormal}, which establishes
the desired result for the case of a standard Gaussian. We now need to reduce to this case.

First, we reduce to a shorter subvector of length $2m$. Define
\[\tilde{Y} = \big(Y_{i-m+1},Y_{i-m+2},\dots,Y_{i+m}\big).\]
The property~\eqref{eqn:subsequence_breakpoint} of isotonic regression implies that
\[\PP{\iso(Y)_i \neq \iso(Y)_{i+1}} \leq \PP{\iso(\tilde{Y})_m\neq \iso(\tilde{Y})_{m+1}},\]
so we now only need to bound this last probability.

Next we show that, since the means $\mu_j$ and variances $\sigma^2_j$ are nearly constant over $j=i-m+1,\dots,i+m$,
we can reduce to a standard Gaussian where the means and variances are constant.
We first state a trivial result comparing multivariate Gaussians, which we prove in Appendix~\ref{sec:proof_lem:reduce_to_constant}:

\begin{lemma}\label{lem:reduce_to_constant}
Fix any integer $k\geq 2$, and any $a_1,\dots,a_k\in\R$ and $b_1,\dots,b_k>0$. 
Let 
\[V\sim\normal\big((a_1,\dots,a_k),\diag\{b^2_1,\dots,b^2_k\}\big)\textnormal{\quad and\quad }W\sim\normal\big(\bar{a}\mathbf{1}_k,\bar{b}^2\mathbf{I}_k\big),\]
where
\[\bar{a} = \frac{1}{k}\sum_{i=1}^k a_i\textnormal{\quad and\quad } \bar{b}^2 = \frac{1}{k}\sum_{i=1}^k b^2_i.\]
Suppose that
\[\sum_{i=1}^k \left(\frac{a_i -\bar{a}}{\bar{b}}\right)^2 \leq \frac{C_1'}{\log k}\textnormal{\quad and\quad }
\max_{i=1,\dots,k}\left|\frac{b_i^2}{\bar{b}^2}-1\right| \leq \frac{C_2'}{\sqrt{k\log k}} .\]
Then, for any $c > 0$ and any measurable $\mathcal{A}\subseteq\R^k$,
\[\PP{V\in\mathcal{A}}\leq C_3' \cdot \Big( \PP{W\in\mathcal{A}} + k^{-c}\Big),\]
where $C_3'$ depends only on $c,C_1',C_2'$ and not on $k$.
\end{lemma}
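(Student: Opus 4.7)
The plan is to compare $V$ and $W$ via the likelihood ratio $L(w) := p_V(w)/p_W(w)$, where $p_V,p_W$ are the product Gaussian densities. Writing $\mathbb{P}(V\in\mathcal{A}) = \mathbb{E}_W[L(W)\mathbf{1}_{\mathcal{A}}(W)]$ and thresholding at a constant $\tau$ that will become $C_3'$, we get the standard change-of-measure bound
\[
\mathbb{P}(V\in\mathcal{A}) \le \tau\,\mathbb{P}(W\in\mathcal{A}) + \mathbb{E}_W[L(W)\mathbf{1}\{L(W)>\tau\}] = \tau\,\mathbb{P}(W\in\mathcal{A}) + \mathbb{P}_V(L(V)>\tau),
\]
so it suffices to show that $\mathbb{P}_V(L(V) > \tau) \le k^{-c}$ for a suitable constant $\tau$ depending only on $c,C_1',C_2'$.

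Next I would compute $\log L$ explicitly. Writing $\delta_i = a_i - \bar a$ and $\gamma_i = b_i^2/\bar b^2 - 1$, a direct calculation gives
\[
\log L(w) = -\tfrac{1}{2}\sum_{i=1}^k \log(1+\gamma_i) + \sum_{i=1}^k\left(\frac{(w_i-\bar a)^2}{2\bar b^2} - \frac{(w_i-a_i)^2}{2b_i^2}\right).
\]
Plugging in $V_i = a_i + b_i\eta_i$ with $\eta_i\stackrel{\text{iid}}{\sim}\mathcal{N}(0,1)$, the bracketed sum reduces to
\[
\tfrac{1}{2}\sum_i \gamma_i\eta_i^2 + \sum_i \tfrac{b_i\delta_i}{\bar b^2}\eta_i + \tfrac{1}{2}\sum_i \tfrac{\delta_i^2}{\bar b^2}.
\]
The deterministic pieces are controlled by the two hypotheses: $\sum_i \delta_i^2/\bar b^2 \le C_1'/\log k$ by assumption; $\sum_i\gamma_i = 0$ by the definition of $\bar b^2$ so $-\tfrac{1}{2}\sum_i\log(1+\gamma_i) = \tfrac14\sum_i\gamma_i^2 + O(\sum_i|\gamma_i|^3)$, which is $O(1)$ since $\sum_i\gamma_i^2 \le k\cdot (C_2')^2/(k\log k) = (C_2')^2/\log k$.

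For the two random terms, I would apply standard Gaussian concentration. The linear form $\sum_i (b_i\delta_i/\bar b^2)\eta_i$ is Gaussian with variance $\sum_i (1+\gamma_i)\delta_i^2/\bar b^2 \lesssim C_1'/\log k$, so with probability $\ge 1-k^{-c}$ it is bounded by $O(\sqrt{c}\cdot \sqrt{\log k}\cdot \sqrt{1/\log k}) = O(1)$. The quadratic form $\sum_i \gamma_i(\eta_i^2-1)$ is sub-exponential, and Bernstein's inequality gives tail $\exp(-c\min(t^2/\|\gamma\|_2^2, t/\|\gamma\|_\infty))$; with $\|\gamma\|_2^2 \le (C_2')^2/\log k$ and $\|\gamma\|_\infty \le C_2'/\sqrt{k\log k}$, taking $t$ equal to an appropriate constant yields probability $\ge 1-k^{-c}$. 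Intersecting these two high-probability events, $\log L(V)$ is bounded by a constant depending only on $c,C_1',C_2'$, giving the required tail bound on $L(V)$ and completing the proof.

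The main obstacle is arranging the bookkeeping so that both the deterministic pieces and the two tail estimates shrink \emph{just fast enough} to yield a truly constant bound on $\log L$ with only a $k^{-c}$ tail; the hypotheses have been calibrated with exactly the factors of $\log k$ and $\sqrt{k\log k}$ needed for this, so the key is simply to track these factors carefully rather than any subtle probabilistic argument.
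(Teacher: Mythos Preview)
Your proposal is correct and is essentially the same argument as the paper's: the paper also thresholds the likelihood ratio $f_V/f_W$ at a constant, reduces to bounding $\PP_V\{f_V(V)/f_W(V)>C_3'\}$, and expands $\log(f_V(V)/f_W(V))$ into exactly your four pieces (the log-variance term, the quadratic form in $\eta_i$ weighted by $\gamma_i$, the Gaussian linear form, and the deterministic $\sum_i\delta_i^2/\bar b^2$), controlling them with the same concentration tools (Laurent--Massart in place of your Bernstein for the quadratic term, and the Gaussian tail for the linear term). The only cosmetic difference is that the paper phrases the thresholding via the set $\mathcal{B}=\{f_V>C_3'f_W\}$ rather than your $\EE_W[L(W)\mathbf{1}_{\mathcal A}]$ identity, but the resulting inequality and the subsequent term-by-term bookkeeping are identical.
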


Now we apply this result to $\tilde{Y}$. Let $\check{Y}\sim\normal\big(\bar{\mu}\mathbf{1}_{2m},\bar{\sigma}^2\mathbf{I}_{2m}\big)$,
with $\bar{\mu},\bar{\sigma}$ defined as in the statement of Lemma~\ref{lem:breakpointlemma}. Then applying Lemma~\ref{lem:reduce_to_constant}
with $k=2m$, $c=1$, and with
the set $\mathcal{A}$ defined as
\[\mathcal{A} = \left\{y\in\R^{2m} : \iso(y)_m \neq \iso(y)_{m+1}\right\},\]
we see that the conditions of Lemma~\ref{lem:reduce_to_constant} are satisfied for some $C_1',C_2'$ depending only on the values $C_1,C_2$ in the statement of Lemma~\ref{lem:breakpointlemma}.
So, we  have
\[\PP{\iso(\tilde{Y})_m\neq \iso(\tilde{Y})_{m+1}} \leq C_3'\left( \PP{\iso(\check{Y})_m \neq \iso(\check{Y})_{m+1}} + \frac{1}{m}\right),\]
where $C_3'$ depends only on $C_1,C_2$.

Finally, consider a standard multivariate Gaussian, $Z\sim\normal\big(\mathbf{0}_{2m},\mathbf{I}_{2m}\big)$. Clearly we can write $\check{Y} = \bar{\mu}\mathbf{1}_{2m} + \bar{\sigma}Z$,
and so
$\iso(\check{Y}) = \bar{\mu}\mathbf{1}_{2m} + \bar{\sigma}\cdot \iso(Z)$
by the property~\eqref{eqn:iso_locationscale} of isotonic regression. This implies that
\[\PP{\iso(\check{Y})_m \neq \iso(\check{Y})_{m+1}} = \PP{\iso(Z)_m \neq \iso(Z)_{m+1}} \leq \frac{\log m}{m-1},\] 
where the last step applies Lemma~\ref{lem:breakpoint_standardnormal}.

We have therefore proved that
\[\PP{\iso(\tilde{Y})_i\neq \iso(\tilde{Y})_{i+1}} \leq C_3'\left(\frac{\log m}{m-1} + \frac{1}{m}\right),\]
which completes the proof of Lemma~\ref{lem:breakpointlemma} with $C_3$ chosen appropriately.

\subsection{Proof of Gaussian coupling lemma (Lemma~\ref{lem:coupling})}\label{sec:coupling_proof}
Our lemma is a consequence of \citet{sakhanenko1985convergence}'s Gaussian coupling result:
\begin{theorem}[{\citet[Theorem 1]{sakhanenko1985convergence}}]\label{thm:sakhanenko}
Suppose that $Z_1,\dots,Z_n$ are independent random variables with $\EE{Z_j}=0$ and $\VV{Z_j}=\sigma^2_j$, and that for some $\nu>0$, each $Z_j$ satisfies
\[\EE{\nu |Z_j|^3 e^{\nu |Z_j|}}\leq \sigma^2_j.\]
Then there exists a coupling between $(Z_1,\dots,Z_n)$ and $(\tilde{Z}_1,\dots,\tilde{Z}_n)\sim \normal\big(0,\diag\{\sigma^2_1,\dots,\sigma^2_n\}\big)$ that satisfies
\[\EE{\exp\left\{c \nu \max_{1\leq j\leq n} \left|\sum_{k=1}^j Z_k - \sum_{k=1}^j \tilde{Z}_k \right|\right\}}\leq 1 + \nu\sqrt{\sum_{j=1}^n\sigma^2_j},\]
where $c>0$ is a universal constant.
\end{theorem}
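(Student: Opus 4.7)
My plan is to follow the KMT/Sakhanenko strategy: combine a sharp one-summand quantile coupling with a recursive dyadic construction, then telescope the resulting exponential moments across dyadic levels to control the running maximum.

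First, I would establish a single-variable coupling lemma. For a mean-zero $Z$ with $\VV{Z}=\sigma^2$ satisfying $\nu\,\EE{|Z|^3 e^{\nu|Z|}} \leq \sigma^2$, let $F$ be its CDF and $\Phi_\sigma$ be that of $\normal(0,\sigma^2)$, and set $\tilde Z = \Phi_\sigma^{-1}(F(Z))$, the quantile coupling. The exponential third-moment hypothesis forces $F$ to be close to $\Phi_\sigma$ via an Edgeworth-style comparison, yielding a pointwise bound of the form $|Z - \tilde Z| \lesssim \nu\min\{Z^2,\tilde Z^2\}$ plus lower-order terms. This translates, after some work, into an exponential moment estimate $\EE{\exp(c\nu |Z - \tilde Z|)} \leq 1 + C\nu^2 \sigma^2$. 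This per-summand inequality will be the workhorse of the argument.

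Second, I would assemble the full coupling dyadically. Coupling each $Z_j$ independently via the one-variable lemma is too weak, since the errors $Z_j-\tilde Z_j$ are small in exponential moment but need not cancel inside a partial sum. Instead, following KMT, I build the coupling recursively on a dyadic tree of indices. At the root, couple $S_n = \sum_j Z_j$ with $\tilde S_n = \sum_j \tilde Z_j$ via the one-variable lemma, using that $S_n$ inherits a third-moment exponential bound of the same type from its summands. Then, conditional on $(S_n,\tilde S_n)$, couple the midpoint partial sum $S_{\lfloor n/2\rfloor}$ with $\tilde S_{\lfloor n/2\rfloor}$ by applying a conditional version of the quantile lemma to the conditional law of $S_{\lfloor n/2\rfloor}$ given $S_n$ (the analogous statement for the Gaussian side being exact). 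Recursing to depth $\lceil \log_2 n\rceil$ produces a joint law in which every partial sum $\sum_{k\leq j} Z_k$ is matched with $\sum_{k\leq j}\tilde Z_k$.

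Third, I would extract the exponential moment on the running maximum. For each $j$, the discrepancy $\sum_{k\leq j}(Z_k-\tilde Z_k)$ decomposes telescopically as a sum of contributions over the dyadic levels, with the per-level error controlled by an exponential moment of size $1 + C\nu^2 V_\ell$, where $V_\ell$ is the total variance of blocks at level $\ell$. Because blocks at the same level are conditionally independent and branches are independent across the tree, the per-level exponential moments multiply, and summing the errors in the exponent (rather than maximizing) absorbs the supremum over $j$. Optimizing constants and using $\sum_\ell V_\ell \asymp \sum_j \sigma_j^2$ (up to logarithmic factors absorbed into $c$) collapses the product to the stated bound $1 + \nu\sqrt{\sum_j \sigma_j^2}$.

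The hard part is the conditional quantile coupling at each node of the dyadic tree. Two things must be controlled uniformly: the conditional law of $S_{\lfloor n/2\rfloor}$ given $S_n=s$ must satisfy, uniformly in $s$ over a high-probability region, a third-moment exponential bound of the same form that drives the one-variable lemma; and the Edgeworth comparison error must not degrade as one descends through $O(\log n)$ levels, so that the universal constant $c$ in the final bound survives. Establishing the required uniform local-density/Edgeworth estimate for conditional distributions of sums of independent but non-identically distributed summands is the technical heart of Sakhanenko's original argument, and is where the bulk of the work would lie.
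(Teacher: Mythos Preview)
The paper does not prove this theorem at all: it is quoted verbatim as \citet[Theorem 1]{sakhanenko1985convergence} and used purely as a black box in the proof of Lemma~\ref{lem:coupling} (the Gaussian coupling lemma). There is therefore no ``paper's own proof'' to compare your proposal against.

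Your sketch is a reasonable high-level outline of the KMT/Sakhanenko dyadic quantile-coupling machinery, and you correctly identify the technical crux (uniform conditional Edgeworth-type control across $O(\log n)$ dyadic levels for non-i.i.d.\ summands). But since the paper simply cites the result, any attempt to reprove it here is outside the scope of what the paper does; for the purposes of this paper, the appropriate ``proof'' is a one-line citation.
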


Now define
$Z_j = Y_j - \mu_j$ for $j=1,\dots,n$.
To apply Theorem~\ref{thm:sakhanenko}, we will first work with the sequence $Z_i,Z_{i+1},\dots,Z_n$ instead of $Z_1,\dots,Z_n$, i.e.~we begin at the index $i$. Since each $Z_j$ is $(\lambda,\tau)$-subexponential, it therefore satisfies the assumption
$\EE{\nu |Z_j|^3 e^{\nu |Z_j|}}\leq \sigma^2_{\min} \leq \sigma^2_j$ when we choose $\nu$ as an appropriate function of $\lambda$, $\tau$, and $\sigma_{\min}$.
By Theorem~\ref{thm:sakhanenko}, then, we have a coupling between $(Z_i,\dots,Z_n)$ and $(\tilde{Z}_i,\dots,\tilde{Z}_n)\sim\mathcal{N}\big(0,\diag\{\sigma^2_i,\dots,\sigma^2_n\}\big)$ such that
\[\EE{\exp\left\{c \nu \max_{i\leq j\leq n} \left|\sum_{k=i}^j Z_k - \sum_{k=i}^j \tilde{Z}_k \right|\right\}}\leq 1 + \nu\sqrt{\sum_{j=i}^n\sigma^2_j}\leq 1 + \nu \lambda\sqrt{n}\]
(where we recall that $\max_j\sigma_j\leq\lambda$ by the subexponential tails assumption~\eqref{eqn:noise_assump} 
on the original noise terms $Z_j$).
In particular, this implies that
\begin{equation}\label{eqn:sakh_above_i} \PP{\max_{i\leq j\leq n}\left|\sum_{k=i}^j Z_k - \sum_{k=i}^j \tilde{Z}_k \right|
\leq C \log(n/\delta)}\geq 1-\delta\end{equation}
for all $\delta>0$, when $C$ is chosen appropriately as a function of $\lambda,\tau,\sigma_{\min}$.
Following an identical argument on the sequence $Z_{i-1},Z_{i-2},\dots,Z_1$, we can construct a coupling 
of $(Z_1,\dots,Z_{i-1})$ to a Gaussian vector $(\tilde{Z}_1,\dots,\tilde{Z}_{i-1})\sim \normal\big(0,\diag\{\sigma^2_1,\dots,\sigma^2_{i-1}\})\big)$ such that
\begin{equation}\label{eqn:sakh_below_i}\PP{\max_{1\leq j\leq i-1} \left|\sum_{k=j}^{i-1} Z_k- \sum_{k=j}^{i-1} \tilde{Z}_k \right|\leq C\log(n/\delta)}\geq 1-\delta.\end{equation}
Finally, since $(Z_1,\dots,Z_{i-1})$ and $(Z_i,\dots,Z_n)$ are independent, we can also take $(\tilde{Z}_1,\dots,\tilde{Z}_{i-1})$ and $(\tilde{Z}_i,\dots,\tilde{Z}_n)$ to be independent, and thus we have a coupling between $Z$ and a Gaussian random vector $\tilde{Z}\sim\normal\big(0,\diag\{\sigma^2_1,\dots,\sigma^2_n\}\big)$
such that both~\eqref{eqn:sakh_above_i} and~\eqref{eqn:sakh_below_i} hold, which implies that
\begin{equation}\label{eqn:sakh_across_i}\PP{\max_{1\leq j\leq i\leq k\leq n} \left|\sum_{\ell=j}^k Z_\ell - \sum_{\ell=j}^k \tilde{Z}_\ell \right|\leq  2C\log (n/\delta)}\geq 1-2\delta.\end{equation}

Let $\Delta = Z - \tilde{Z}$ denote the error in the coupling constructed above, and $\tilde{Y} = \mu + \tilde{Z}$. 
We therefore have $\overline{Y}_{j:k} = \overline{\tilde{Y}}_{j:k}+\overline{\Delta}_{j:k}$ for all indices $j,k$.
Now, for each $i$, let
\[j_i(\tilde{Y}) = \min\{j : \iso(\tilde{Y})_j = \iso(\tilde{Y})_i\}\textnormal{ and }k_i(\tilde{Y}) = \max\{k : \iso(\tilde{Y})_k = \iso(\tilde{Y})_i\}\]
as in~\eqref{eqn:minmax_at_jk},
which are the endpoints of the constant segment of the isotonic projection $\iso(\tilde{Y})$ containing index $i$.
Using the definition of isotonic regression, we calculate
\begin{multline*}
\iso(Y)_i
=\max_{j\leq i}\min_{k\geq i} \overline{Y}_{j:k}
\leq \max_{j\leq i} \overline{Y}_{j:k_i(\tilde{Y})}
\leq \max_{j\leq i} \overline{\tilde{Y}}_{j:k_i(\tilde{Y})} + \max_{j\leq i}\overline{\Delta}_{j:k_i(\tilde{Y})}\\
= \iso(\tilde{Y})_i + \max_{j\leq i}\overline{\Delta}_{j:k_i(\tilde{Y})}
\leq \iso(\tilde{Y})_i + \frac{1}{k_i(\tilde{Y}) - i+1 } \cdot \max_{1\leq j\leq i\leq k\leq n} \Big|\sum_{\ell=j}^k \Delta_\ell\Big|.
\end{multline*}
Similarly we can show that
\[\iso(Y)_i \geq \iso(\tilde{Y})_i - \frac{1}{i - j_i(\tilde{Y})+1 } \cdot \max_{1\leq j\leq i\leq k\leq n} \Big|\sum_{\ell=j}^k \Delta_\ell\Big|.\]
Therefore,
\begin{multline*}\EE{\left|\iso(Y)_i - \iso(\tilde{Y})_i\right|} \leq \EE{\left(\frac{1}{ i-j_i(\tilde{Y})+1}+\frac{1}{k_i(\tilde{Y}) - i+1}\right) \cdot \max_{1\leq j\leq i\leq k\leq n} \Big|\sum_{\ell=j}^k \Delta_\ell\Big|}\\
\leq  c'\log n \left(\EE{\frac{1}{i-j_i(\tilde{Y})+1}} + \EE{\frac{1}{k_i(\tilde{Y})-i+1}}\right) +  \EE{\left(\max_{1\leq j\leq i\leq k\leq n}\Big|\sum_{\ell=j}^k \Delta_\ell\Big| - c'\log n\right)_+}\end{multline*} 
for any $c'>0$.
We can further calculate
\begin{multline*}
\EE{\left(\max_{1\leq j\leq i\leq k\leq n}\Big|\sum_{\ell=j}^k \Delta_\ell\Big| - c'\log n\right)_+}
\leq \int_{t\geq c'\log n}\PP{\max_{1\leq j\leq i\leq k\leq n}\Big|\sum_{\ell=j}^k \Delta_\ell\Big| > t}\;\mathsf{d}t\\
\stackrel{\textnormal{ by~\eqref{eqn:sakh_across_i}}}{\leq} \int_{t\geq c'\log n} 2ne^{-t/2C}\;\mathsf{d}t
=4Cn e^{-(c'\log n)/2C}
\leq \frac{1}{n},
\end{multline*}
when $c'$ is chosen as an appropriate function of $C$.
Combining what we have so far, then,
\[\EE{\left|\iso(Y)_i - \iso(\tilde{Y})_i\right|} \leq   c'\log n \left(\EE{\frac{1}{i-j_i(\tilde{Y})+1}} + \EE{\frac{1}{k_i(\tilde{Y})-i+1}}\right) + \frac{1}{n}.\]
We will now bound these expected values. 

For any $\ell\geq 1$ with $i+\ell\leq n$, it is trivial to see that
\[\PP{k_i(\tilde{Y}) - i+1 = \ell} = \PP{k_i(\tilde{Y}) = \ell+i-1} \leq \PP{\iso(\tilde{Y})_{\ell+i-1}\neq \iso(\tilde{Y})_{\ell+i}}.\]
Therefore, for any $\ell_{\max}\geq 1$ with $i+\ell_{\max}\leq n$,
\begin{multline*} \EE{\frac{1}{k_i(\tilde{Y})-i+1}} \leq \sum_{\ell=1}^{\ell_{\max}} \frac{\PP{k_i(\tilde{Y})-i+1=\ell}}{\ell} + \frac{1}{\ell_{\max}} \\\leq  \sum_{\ell=1}^{\ell_{\max}} \frac{\PP{\iso(\tilde{Y})_{\ell+i-1}\neq \iso(\tilde{Y})_{\ell+i}}}{\ell} + \frac{1}{\ell_{\max}}.\end{multline*}
Next, we will use the breakpoint lemma (Lemma~\ref{lem:breakpointlemma}) to bound these probabilities. 
Fix
\[m = \ell_{\max}= \left\lceil \frac{n^{2/3}}{(\log n)^{1/3}}\right\rceil.\]
We apply the lemma at  the index $i'(\ell) = \ell+i-1$ in place of $i$. Since we've assumed that $\frac{C_2 n^{2/3}}{(\log n)^{1/3}}\leq i\leq n-\frac{C_2 n^{2/3}}{(\log n)^{1/3}}$,
it's trivial to check that $m \leq i'(\ell) \leq n-m$ for an appropriate choice of $C_2$. Next,
since the means $\mu_j$ are $L_1$-Lipschitz~\eqref{eqn:mu_L1} and the standard deviations $\sigma_j$ are $L_\sigma$-Lipschitz~\eqref{eqn:sig_Lip},
we can verify that the assumptions~\eqref{eqn:breakpointlemma_nearlyconstant} are satisfied 
for $C_1,C_2$ depending only on $L_1,L_\sigma$ (and not on $n$). Therefore
\[\PP{\iso(\tilde{Y})_{i'(\ell)} \neq \iso(\tilde{Y})_{i'(\ell)+1}}\leq \frac{C_3 (\log n)^{4/3}}{n^{2/3}}\]
for all $\ell= 1,\dots,\ell_{\max}$, where $C_3$ depends only on $C_1,C_2$. Returning to the work above, then,
\[ \EE{\frac{1}{k_i(\tilde{Y})-i+1}}  \leq  \sum_{\ell=1}^{\ell_{\max}} \frac{\frac{C (\log n)^{4/3}}{n^{2/3}}}{\ell} + \frac{1}{\ell_{\max}} \leq \frac{C'(\log n)^{7/3}}{n^{2/3}},\]
where $C'$ is chosen appropriately as a function of $C_3$. An identical argument holds for bounding $\EE{\frac{1}{i-j_i(\tilde{Y})+1}}$.
Combining everything, we see that
\[\EE{\left|\iso(Y)_i - \iso(\tilde{Y})_i\right|} \leq \frac{c'\cdot C'\cdot (\log n)^{10/3}}{n^{2/3}} + \frac{1}{n},\]
which proves the coupling lemma for an appropriately chosen $C_1$.

\subsection{Proof of Lemma~\ref{lem:reduce_to_constant}}\label{sec:proof_lem:reduce_to_constant}
First we define likelihoods,
\[f_V(x) = \frac{1}{(2\pi)^{k/2}\prod_{i=1}^k b_i}\exp\left\{-\sum_{i=1}^k (x_i - a_i)^2 / 2b_i^2\right\}\]
and
\[f_W(x) = \frac{1}{(2\pi)^{k/2} \bar{b}^k}\exp\left\{-\sum_{i=1}^k (x_i - \bar{a})^2 / 2\bar{b}^2\right\}.\]
Define the set
\[\mathcal{B} = \left\{x\in\R^k : f_V(x) > C_3' \cdot f_W(x)\right\},\]
where $C_3'$ will be defined below.
Then
\[\PP{V\in\mathcal{A}} \leq \PP{V\in\mathcal{B}} + \PP{V\in\mathcal{A}\backslash\mathcal{B}},\]
and
\begin{align*}
\PP{V\in\mathcal{A}\backslash\mathcal{B}}
&=\int_{x\in\R^k}\One{x\in\mathcal{A}\backslash\mathcal{B}} \cdot f_V(x)\;\mathsf{d}x\\
&=\int_{x\in\R^k}\One{x\in\mathcal{A}\backslash\mathcal{B}} \cdot \frac{f_V(x)}{f_W(x)} \cdot f_W(x)\;\mathsf{d}x\\
&\leq \int_{x\in\R^k}\One{x\in\mathcal{A}\backslash\mathcal{B}} \cdot C_3' \cdot f_W(x)\;\mathsf{d}x\\
&=C_3' \cdot \PP{W\in\mathcal{A}\backslash\mathcal{B}}\\
&\leq C_3'\cdot\PP{W\in\mathcal{A}},\end{align*}
where the first inequality holds by definition of $\mathcal{B}$.
Therefore, we now only need to bound $\PP{V\in\mathcal{B}}$.
We calculate
\begin{align*}
&\frac{f_V(V)}{f_W(V)}
 = \frac{\frac{1}{(2\pi)^{k/2}\prod_{i=1}^k b_i}\exp\left\{-\sum_{i=1}^k (V_i - a_i)^2 / 2b_i^2\right\}}{\frac{1}{(2\pi)^{k/2} \bar{b}^k}\exp\left\{-\sum_{i=1}^k (V_i - \bar{a})^2 / 2\bar{b}^2\right\}}\\
& = \underbrace{\left(\prod_{i=1}^k \frac{\bar{b}}{b_i}\right)}_{\textnormal{Term 1}}\cdot  \exp\left\{ \frac{1}{2}\underbrace{\sum_{i=1}^k \left(\frac{V_i - a_i}{b_i}\right)^2 \cdot \left(\frac{b^2_i}{\bar{b}^2}-1\right)}_{\textnormal{Term 2}}  + \underbrace{\sum_{i=1}^k \frac{V_i - a_i}{b_i}\cdot \frac{a_i - \bar{a}}{\bar b^2 / b_i}}_{\textnormal{Term 3}} + \frac{1}{2}\underbrace{\sum_{i=1}^k \left(\frac{a_i - \bar{a}}{\bar{b}}\right)^2}_{\textnormal{Term 4}}\right\}.
\end{align*}
Next we will bound each term separately. 

For Term 1, first note that we can assume $\frac{C_2'}{\sqrt{k\log k}} \leq \frac{1}{2}$ (because if this
does not hold then $k$ is bounded by a constant, and so the conclusion of the lemma holds trivially; i.e., by setting $C_3'$ appropriately, the claim reduces
to bounding a probability by $1$). Therefore, 
\begin{align*}
\textnormal{Term 1} &=\prod_{i=1}^k \frac{\bar{b}}{b_i}
=\exp\left\{-\frac{1}{2}\sum_{i=1}^k \log(b^2_i/\bar{b}^2)\right\}\\
&\leq \exp\left\{-\frac{1}{2}\sum_{i=1}^k \left[\big(b^2_i/\bar{b}^2-1\big) - 2\big(b^2_i/\bar{b}^2-1\big)^2\right]\right\}\textnormal{ since $|b^2_i/\bar{b}^2-1|\leq \frac{C_2'}{\sqrt{k\log k}}\leq \frac{1}{2}$}\\
&= \exp\left\{\sum_{i=1}^k \big(b^2_i/\bar{b}^2-1\big)^2 \right\}\textnormal{ by definition of $\bar{b}^2$}\\
&\leq \exp\left\{k\left(\max_{i=1,\dots,k}\left|\frac{b^2_i}{\bar{b}^2}-1\right|\right)^2\right\}
\leq \exp\left\{C_2'{}^2 / \log 2\right\},
\end{align*}
since $k\geq 2$. 

Next, for Term 2, note that $\left(\frac{V_i - a_i}{b_i}\right)^2\iidsim \chi^2_1$. Define
\[r^+_i = \max\left\{0, \left(\frac{b^2_i}{\bar{b}^2}-1\right)\right\} \textnormal{\quad and \quad } r^-_i = \max\left\{0,-\left(\frac{b^2_i}{\bar{b}^2}-1\right)\right\}.\]
 By \citet[Lemma 1]{laurent2000adaptive}, we have
\[\PP{\sum_{i=1}^k \left(\frac{V_i - a_i}{b_i}\right)^2 \cdot r^+_i   \leq \sum_{i=1}^k r^+_i + \max_{i=1,\dots,k} r^+_i \cdot \big(2\sqrt{kc\log k} + 2c\log k\big)}\geq 1-k^{-c}\]
and
\[\PP{\sum_{i=1}^k \left(\frac{V_i - a_i}{b_i}\right)^2 \cdot  r^-_i \geq \sum_{i=1}^k  r^-_i - \max_{i=1,\dots,k}  r^-_i \cdot 2\sqrt{kc\log k} }\geq 1-k^{-c}.\]
Combining the two, and using the fact that $\sum_{i=1}^k r^+_i - r^-_i = 0$ by definition of $\bar{b}$, this simplifies to
\[\PP{\textnormal{Term 2} \leq \max_{i=1,\dots,k} \left|\frac{b^2_i}{\bar{b}^2}-1\right|\cdot \big(4\sqrt{kc\log k} + 2c\log k\big)}\geq 1 - 2k^{-c}.\]
Since $\log k \leq \sqrt{k \log k}$ for all integers $k\geq 2$, we thus have
\[\PP{\textnormal{Term 2}  \leq  C_2' (4\sqrt{c} + 2c)}\geq 1-2k^{-c}.\]

Turning to Term 3, since $\frac{V_i - a_i}{b_i}\iidsim \normal(0,1)$, we have
\[\textnormal{Term 3} =  \sum_{i=1}^k \frac{V_i - a_i}{b_i}\cdot \frac{a_i - \bar{a}}{\bar b^2 / b_i}  \sim \normal\left(0, \sum_{i=1}^k \left(\frac{a_i - \bar{a}}{\bar b^2 / b_i}  \right)^2\right),\]
and so
\[\PP{\textnormal{Term 3} \leq \sqrt{\sum_{i=1}^k \left(\frac{a_i - \bar{a}}{\bar b^2 / b_i}  \right)^2} \cdot \sqrt{2c\log k}}\geq 1- k^{-c}.\]
We can weaken this to
\[\PP{\textnormal{Term 3} \leq \sqrt{1 + \max_{i=1,\dots,k}  \left|\frac{b^2_i}{\bar{b}^2}-1\right|}\sqrt{\sum_{i=1}^k \left(\frac{a_i - \bar{a}}{\bar b}  \right)^2} \cdot \sqrt{2c\log k}}\geq 1- k^{-c}.\]
Plugging in our definitions of $C_1',C_2'$, and using $k\geq 2$, we then have
\[\PP{\textnormal{Term 3} \leq \sqrt{1 + \frac{C_2'}{\sqrt{2 \log 2}}} \cdot  \sqrt{2cC_1'}}\geq 1- k^{-c}.\]
Finally, for the last term we can calculate
\[ \textnormal{Term 4} = \sum_{i=1}^k \left(\frac{a_i - \bar{a}}{\bar{b}}\right)^2 \leq \frac{C_1'}{\log k} \leq \frac{C_1'}{\log 2}.\]

Combining everything and simplifying, with probability at least $1-3k^{-c}$, we have
\[\frac{f_V(V)}{f_W(V)}\leq \exp\left\{\frac{C_2'{}^2}{\log 2}  +  C_2'(2\sqrt{c}+c) +\sqrt{1 + \frac{C_2'}{\sqrt{2 \log 2}}} \cdot  \sqrt{2cC_1'}+  \frac{C_1'}{2\log 2} \right\}.\]
Defining 
\[C_3' = \max\left\{3 , \exp\left\{\frac{C_2'{}^2}{\log 2}+  C_2'(2\sqrt{c}+c) +\sqrt{1 + \frac{C_2'}{\sqrt{2 \log 2}}} \cdot  \sqrt{2cC_1'}+  \frac{C_1'}{2\log 2} \right\}\right\},\]
we therefore have
\[\PP{V\in\mathcal{B}} = \PP{\frac{f_V(V)}{f_W(V)} >C_3'}  \leq 3k^{-c} \leq C_3'k^{-c},\]
as desired.

\end{document}